\definecolor{keywordcolor}{rgb}{0.7, 0.1, 0.1}   
\definecolor{tacticcolor}{rgb}{0.0, 0.1, 0.6}    
\definecolor{commentcolor}{rgb}{0.4, 0.4, 0.4}   
\definecolor{symbolcolor}{rgb}{0.0, 0.1, 0.6}    
\definecolor{sortcolor}{rgb}{0.1, 0.5, 0.1}      
\definecolor{attributecolor}{rgb}{0.7, 0.1, 0.1} 
\author{Robin Khanfir\footnote{
  Sorbonne Université, Campus Pierre et Marie Curie, LPSM, Case courrier 158, 4, place Jussieu, 75252 Paris Cedex 05, France. Email: \texttt{robin.khanfir@sorbonne-universite.fr}
} \and Béranger Seguin\footnote{
  Univ. Lille, CNRS, UMR 8524 - Laboratoire Paul Painlevé, F-59000 Lille, France. Email: \texttt{beranger.seguin@ens.psl.eu}}}
\title{Study of a division-like property}
\newcounter{c}[section]
\newtheorem{theorem}[c]{Theorem}
\newtheorem{corollary}[c]{Corollary}
\newtheorem{lemma}[c]{Lemma}
\newtheorem{definition}[c]{Definition}
\newtheorem{notation}[c]{Notation}
\newtheorem{proposition}[c]{Proposition}
\newtheorem{conjecture}{Conjecture}
\newtheorem{remark}[c]{Remark}
\newcommand{\End}{\mathrm{End}}
\newcommand{\Ker}{\mathrm{Ker}}
\newcommand{\Ima}{\mathrm{Im}}
\newcommand{\eqdef}{\overset{\text{def}}{=}}
\newcommand{\N}{\mathbb{N}}
\newcommand{\Z}{\mathbb{Z}}
\newcommand{\Zz}{{\mathbb{Z}_{\geq 0}}}
\newcommand{\andd}{\hspace{1cm}\text{and}\hspace{1cm}}
  \let\HyPsd@CatcodeWarning\@gobble
  \def\\{}%
  \def\delta{δ}%
\begin{document}
\maketitle{}
\vspace{-0.8cm}
\begin{center}
  \sl To our friend Assil Fadle.
\end{center}

\abstract{
  We introduce a weak division-like property for noncommutative rings: a nontrivial ring is fadelian if for all nonzero $a,x$ there exist $b,c$ such that $x=ab+ca$.
  We prove properties of fadelian rings, and construct examples of such rings which are not division rings, as well as non-Noetherian and non-Ore examples.
  \\ \\
  \textbf{Keywords: } Noncommutative rings $\cdot$ Simple algebras $\cdot$ Ore condition $\cdot$ Ore extension\\
  \textbf{Mathematics Subject Classification (MSC 2010): } 16U20 $\cdot$ 13N10 $\cdot$ 12E15 $\cdot$ 16P40
}

\section{Introduction}

This article is a compilation of results obtained between 2017 and 2019 on questions of noncommutative algebra.
At the time, we were a group of students grouped under the informal humoristic name of \textit{Département de Mathématiques Inapplicables}.
Among others, Maxime Ramzi is to thank for his precious help.

All rings considered are nontrivial, unital, and not assumed to be commutative.
Various notions of weak inversibility have already been considered, notably (strong) von Neumann regularity \autocite{neumann} and unit regularity.
We introduce a new class of rings satisfying a weak form of divisibility.
They are the \emph{fadelian} and \emph{weakly fadelian} rings:
\begin{definition}
  \label{defn:fad-rings}
  A ring $R$ is:
  \begin{itemize}
    \item
      \emph{fadelian} if for any $x \in R$ and any nonzero $a \in R$, there exist $b, c \in R$ such that:
        \[ x = ab + ca; \]
    \item
      \emph{weakly fadelian} if for any nonzero $a \in R$, there exist $b, c \in R$ such that:
        \[ 1 = ab + ca. \]
  \end{itemize}
\end{definition}

We have the following implications:
  \[ R \text{ is a division ring}
    \Rightarrow R \text{ is fadelian}
    \Rightarrow R \text{ is weakly fadelian}. \]

A natural question is whether any of these implications are equivalences, and to construct counterexamples when they are not.
\begin{itemize}
  \item
    In \Cref{sn:general}, we prove that weakly fadelian rings are simple (\Cref{prop:wfad-is-simple}) and integral (\Cref{weakly-fad-is-integral}), and that weakly fadelian Ore rings are fadelian (\Cref{wf-ore-is-fad}).
  \item
    In \Cref{sn:diff-alg}, we study differential algebras.
    We give conditions for these to yield fadelian rings (\Cref{necc-cond-Rdelta-fad} and \Cref{crit-Rdelta-fad-comm}).
    These conditions are satisfied for differentially closed fields, so this gives the first example of a fadelian ring which is not a division ring.
    In \Cref{ex-fad-non-noeth} and \Cref{countable-nonnoeth-fad}, we transform this example into a countable non-Noetherian fadelian ring.
  \item
    In \Cref{sn:formal-laurent-series}, we study Laurent series on fadelian rings.
    They are themselves fadelian (\Cref{laurent-series-fad}) and turn previous examples into an example of a non-Ore fadelian ring (\Cref{laurent-nonnoeth-countable-fadring-not-ore}, \Cref{cor:non-ore-fad}).
\end{itemize}

The results are summarized in the following map:
\begin{figure}[H]
  \centering
  \includegraphics[scale=0.7]{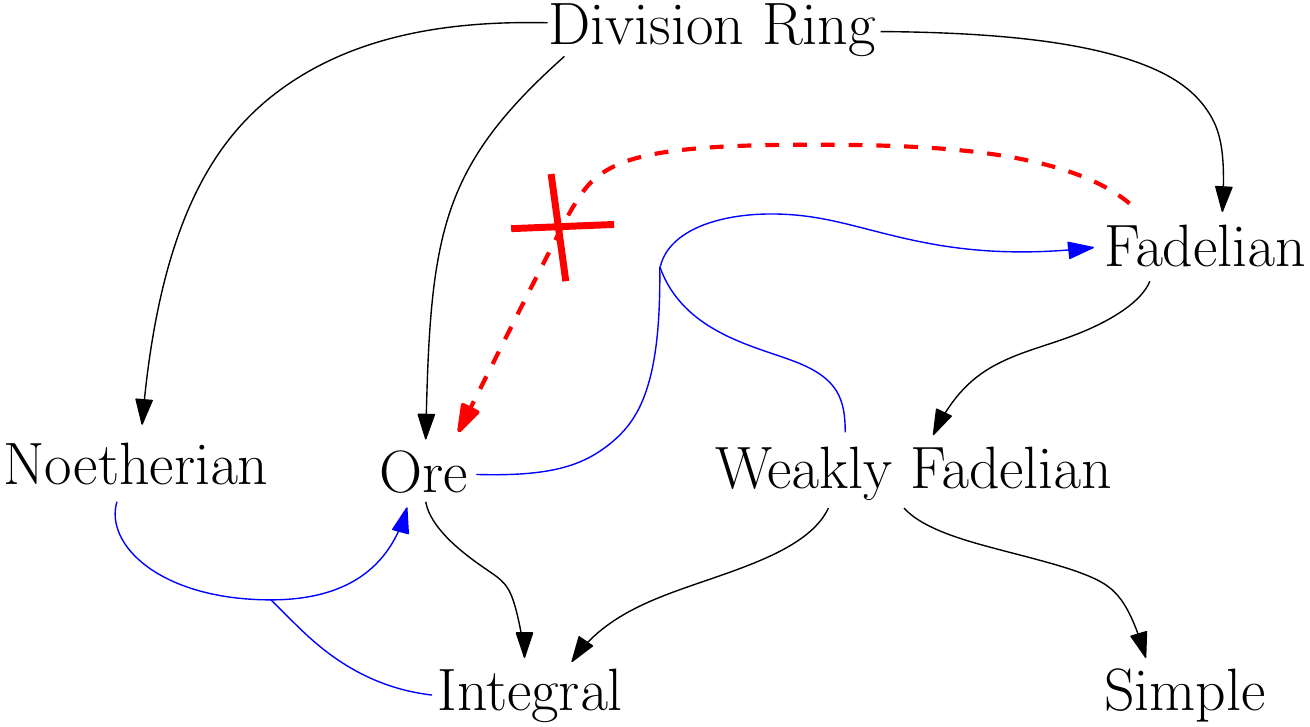}

  \caption{
    Known (non)-implications between the different notions.
    Only a generating set of arrows is drawn.
    The blue arrows are of the form ``$A$ and $B$ imply $C$''.
    The red crossed arrow is known not to be an implication.
  }
\end{figure}

The main open conjecture is the following one:
\begin{conjecture}
  \label{conj:wk-fad-not-fad}
  There is a weakly fadelian ring which is not fadelian.
\end{conjecture}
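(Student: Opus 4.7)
Since this is listed as an open conjecture, any proposal is necessarily speculative, but here is the strategy I would pursue. The key observation guiding the search is \Cref{wf-ore-is-fad}: a weakly fadelian ring that fails to be fadelian must necessarily violate the Ore condition. So one should look among non-Ore simple integral rings, and the natural place to start is in the vicinity of the non-Ore fadelian examples constructed in \Cref{sn:formal-laurent-series}. For a nonzero $a \in R$, write $I_a := aR + Ra$; this is an additive subgroup but generally not a two-sided ideal. Fadelian means $I_a = R$ for all nonzero $a$, while weakly fadelian means only $1 \in I_a$. The target is thus a ring in which $I_a$ is a proper additive subgroup of $R$ containing $1$ for every nonzero $a$.

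The first angle I would try is to weaken, rather than strengthen, one of the constructions in the paper. Starting from the non-Ore fadelian ring built on Laurent series over a differentially closed field, one could try to pass to a carefully chosen subring that still contains enough witnesses to force $1 \in I_a$ for every nonzero $a$, but omits the witnesses needed to reach some particular $x$. A natural candidate is a ``sparse'' subring where coefficient supports or growth rates are restricted: the restriction should be mild enough that the solvability of $1 = ab + ca$, which requires relatively little, survives, while being severe enough that a generic $x$ fails to lie in $I_a$.

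The second angle is a direct universal-type construction. Begin with a free algebra over a field, and iteratively adjoin new generators $b_\alpha, c_\alpha$ to enforce $ab_\alpha + c_\alpha a = 1$ for each nonzero $a$ appearing, while deliberately refusing to adjoin solutions to $ab + ca = x_0$ for a fixed pair $(a_0, x_0)$. Taking the colimit yields a ring which is weakly fadelian by construction, and one must then verify that it is integral (via a diamond lemma or normal-form argument in the spirit of the paper's differential examples) and that the forbidden equation $x_0 = a_0 b + c a_0$ still has no solution.

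The main obstacle, for either approach, is the asymmetry of the proof burden: one must simultaneously \emph{construct} witnesses for $1 = ab + ca$ (for every nonzero $a$) and \emph{obstruct} witnesses for some specific $x = ab + ca$. The techniques used in the paper to establish weak fadelianness — derivations, recursion on degree, saturation — tend to extend automatically to full fadelianness, so the essential new idea must be a mechanism by which the bootstrap from $1 \in I_a$ to $R \subseteq I_a$ genuinely fails. Identifying such a mechanism, rather than filling in technical details, is where the real difficulty lies.
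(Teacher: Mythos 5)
You have not given a proof, and to be clear, neither does the paper: \Cref{conj:wk-fad-not-fad} is stated there as the main open problem (with a pizza attached), so there is no argument of the authors' to compare yours against. Your framing of the search space is consistent with what the paper establishes: by \Cref{wf-ore-is-fad} a counterexample cannot be right Ore, by \Cref{prop:wfad-is-simple} and \Cref{weakly-fad-is-integral} it must be a simple domain, and by \Cref{necc-cond-Rdelta-fad} it cannot be of the form $R[\delta]$, so the constraints you list are correct. But both of your strategies stop exactly where the difficulty begins, and each has a concrete weak point beyond ``details to fill in.''

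For the subring approach: weak fadelianity is not inherited by subrings, so you would have to close under witnesses for $1 = ab + ca$ as in \Cref{lemma-subring-fad-countable}; the problem is that this saturation adjoins, at every stage, witnesses for all nonzero elements created at earlier stages, and you have no mechanism preventing products and sums of these witnesses from accidentally exhibiting some fixed $x_0$ as an element of $a_0R_\infty + R_\infty a_0$. A ``sparseness'' or growth restriction on Laurent coefficients is not obviously compatible with the recursive construction of the $b_n, c_n$ in the proof of \Cref{laurent-series-fad}, which uses arbitrary fadelian witnesses at each coefficient. For the universal construction: one cannot simply ``refuse to adjoin'' solutions of $x_0 = a_0 b + c a_0$, because after imposing the relations $1 = a b_\alpha + c_\alpha a$ (for all nonzero $a$, including those involving previously adjoined generators) one must prove, presumably by a normal-form or diamond-lemma argument, that no solution to the forbidden equation is forced as a consequence, and simultaneously that the quotient is a nontrivial domain; this is precisely the ``obstruction mechanism'' you correctly identify as missing. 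So the proposal is a reasonable research plan, but the statement remains unproved both in your text and in the paper.
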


The authors of this document offer a pizza from \textit{Golosino} to anyone who proves or disproves \Cref{conj:wk-fad-not-fad}.

\section{Properties of weakly fadelian rings}
\label{sn:general}

In this section, $R$ is a weakly fadelian ring.
We prove various properties of $R$.
The proofs of \Cref{weakly-fad-is-integral} and \Cref{wf-ore-is-fad} have been formalized in the Lean proof assistant, cf \Cref{app:formalization}.

A first remark is that if $R$ is commutative, it is a field.
This is generalized by the following proposition:
\begin{proposition}
  \label{prop:wfad-is-simple}
  The ring $R$ is simple.
\end{proposition}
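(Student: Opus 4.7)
The plan is to show that every nonzero two-sided ideal of $R$ is all of $R$, which is the definition of simplicity. So I fix an arbitrary nonzero two-sided ideal $I \subseteq R$ and aim to prove $1 \in I$, from which $I = R$ follows immediately.

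To produce $1 \in I$, I would pick any nonzero element $a \in I$ (which exists because $I$ is assumed nonzero) and apply the weakly fadelian hypothesis to $a$: this yields elements $b, c \in R$ such that $1 = ab + ca$. Since $I$ is a two-sided ideal and $a \in I$, both $ab$ and $ca$ belong to $I$, hence so does their sum $1 = ab + ca$. This finishes the argument.

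I do not expect any real obstacle here: the proof is essentially a one-line unpacking of the definition, and the use of a two-sided decomposition $ab + ca$ in the definition of weakly fadelian is exactly what makes the argument work for two-sided ideals (a one-sided analogue of the definition would only give simplicity on the corresponding side). It is worth noting, as motivation for the sequel, that this is the very first place where the precise shape $ab + ca$ intervenes rather than a purely left- or right-sided version.
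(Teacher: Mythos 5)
Your proof is correct and is essentially identical to the paper's: pick a nonzero $a$ in a nonzero two-sided ideal $I$, write $1 = ab + ca$ by weak fadelianity, and conclude $1 \in I$ since $I$ absorbs multiplication on both sides. Nothing to add.
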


\begin{proof}
  Let $I$ be a nonzero two-sided ideal of $R$.
  Let $a \in I \setminus \{0\}$. 
  Since $R$ is weakly fadelian, there are $b,c \in R$ such that $1 = ab + ca$. We obtain $1 \in I$ and finally $I = R$.
\end{proof}

We now prove the following result:
\begin{theorem}
  \label{weakly-fad-is-integral}
  The ring $R$ is integral.
\end{theorem}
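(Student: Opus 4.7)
My plan is to argue by contradiction: suppose nonzero $a, d \in R$ satisfy $ad = 0$, and derive an absurdity using two preparatory facts about weakly fadelian rings.

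The first fact is that $R$ has no nontrivial idempotents. Given $u^2 = u$ with $u \neq 0$, weak fadelity supplies $1 = ub + cu$. Right-multiplying by $1 - u$ kills the second term (since $u(1-u) = 0$), leaving $1 - u = u \cdot b(1-u)$. Left-multiplying this identity by $u$ and using $u^2 = u$ gives $u(1-u) = u \cdot b(1-u) = 1 - u$, while simultaneously $u(1-u) = 0$, so $u = 1$. The second fact is that $x^2 = 0$ implies $x = 0$: from $1 = xb + cx$, multiplying by $x$ on each side and using $x^2 = 0$ yields $x = xbx = xcx$, whence $(xb)^2 = xbxb = xb$ and $(cx)^2 = cxcx = cx$, so $xb$ and $cx$ are idempotents summing to $1$. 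By the first fact each lies in $\{0,1\}$; whichever equals $1$ yields $x = 0$ after multiplying by $x$ and invoking $x^2 = 0$.

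For the main argument, $(da)^2 = d(ad)a = 0$ together with the second fact forces $da = 0$. Applying weak fadelity to $a$ gives $1 = ab + ca$, and left-multiplication by $d$ (using $da = 0$) yields $d = dca$. Applying weak fadelity to $d$ gives $1 = dB + Cd$, and left-multiplication by $a$ (using $ad = 0$) yields $a = aCd$. Substituting the first into the second produces $a = aC(dca) = a(Cdc)a$. Setting $\gamma = Cdc$, we have $a = a\gamma a$, so $a\gamma$ is idempotent: $(a\gamma)^2 = (a\gamma a)\gamma = a\gamma$. It is nonzero (else $a = (a\gamma)a = 0$) and distinct from $1$ (else $d = d(a\gamma) = (da)\gamma = 0$), contradicting the first fact. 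The main subtlety is extracting this nontrivial idempotent; the key is to apply weak fadelity to \emph{both} $a$ and $d$ and combine the two resulting identities by substitution.
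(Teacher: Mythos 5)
Your argument is correct, but it takes a genuinely different route from the paper's. The shared ingredients are the reduction $ad=0 \Rightarrow (da)^2=0 \Rightarrow da=0$ and the lemma that $x^2=0$ forces $x=0$, in whose proof both you and the paper observe that $xb$ and $cx$ are idempotent. The paper's other pillar is the lemma that $xy=yx=0$ implies $x^2=0$ or $y^2=0$ (proved by expanding $y(xb+cx)y$), which it uses twice: to finish the square-zero lemma (after checking $xb$ and $cx$ commute) and then to conclude directly from $ad=da=0$. Your other pillar is instead that a weakly fadelian ring has no idempotents besides $0$ and $1$: it finishes your square-zero lemma without any commutation argument, and in the endgame you apply weak fadelianity to \emph{both} $a$ and $d$, splice $d=dca$ into $a=aCd$, and exhibit the explicit idempotent $a\gamma$ with $\gamma=Cdc$, which is neither $0$ nor $1$, a contradiction. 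Your route yields a structural statement of independent interest (no nontrivial idempotents, which does not follow from simplicity alone) and a slick verification of idempotency via $x=xbx=xcx$; the paper's route has a shorter endgame, invoking weak fadelianity only once in the final step. Two small points you should make explicit: weak fadelianity is applied to $x$ only when $x\neq 0$ (the case $x=0$ of your second fact being trivial), and at least one of $xb,cx$ equals $1$ because they sum to $1$ and the ring is nontrivial.
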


Here, integral means ``containing no zero divisors'', without requiring commutativity.
The proof uses two lemmas:
\begin{lemma}
\label{lemma-first-integral}
  Assume $x,y \in R$ satisfy $xy=yx=0$. Then $x^2=0$ or $y^2=0$.
\end{lemma}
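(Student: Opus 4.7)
The plan is to apply the weak fadelian property to the element $x$ (after handling the trivial case $x = 0$), obtaining an identity of the form $1 = xb + cx$, and then to squeeze $y$ between the two hypotheses $xy = 0$ and $yx = 0$ to force $y^2 = 0$.

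First I would dispose of the case $x = 0$, where $x^2 = 0$ holds directly. Otherwise, weak fadelianness of $R$ applied to the nonzero element $x$ produces $b, c \in R$ such that $1 = xb + cx$. Multiplying this identity on the left by $y$ and using $yx = 0$, the $cx$ term contributes $ycx$ while the $xb$ term vanishes, giving the simpler identity $y = ycx$.

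With $y = ycx$ in hand, I would multiply on the right by $y$ to get $y^2 = ycxy$, and then use $xy = 0$ to conclude $y^2 = 0$. The symmetry of the hypothesis makes clear that had we applied weak fadelianness to $y$ instead, we would have obtained $x^2 = 0$; either outcome satisfies the conclusion.

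The argument is very short, so there is no real obstacle; the only insight is to notice that the two-sided decomposition of $1$ supplied by weak fadelianness meshes exactly with the two relations $xy = 0$ and $yx = 0$, each of which ends up being used precisely once.
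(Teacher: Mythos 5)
Your proof is correct and follows essentially the same route as the paper: after disposing of the case $x=0$, both arguments write $1 = xb + cx$ by weak fadelianity and compute $y \cdot 1 \cdot y$, using $yx=0$ to kill the $xb$ term and $xy=0$ to kill the $cx$ term. Splitting the computation into the two steps $y = ycx$ and then $y^2 = yc(xy) = 0$ is only a cosmetic rearrangement of the paper's single-line expansion.
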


\begin{proof}
  If $x=0$, this is immediate.
  Otherwise, use weak fadelianity to write
    \[ 1 = xb+cx \]
  for some $b,c \in R$.
  Then:
  \begin{align*}
    y^2 & = y \cdot 1 \cdot y \\
    & = y (xb+cx) y \\
    & = yxby + ycxy \\
    & = (yx)by + yc(xy) \\
    & = 0.
  \end{align*}
\end{proof}

\begin{lemma}
  \label{lemma-second-integral}
  Assume $x \in R$ satisfies $x^2=0$.
  Then $x=0$.
\end{lemma}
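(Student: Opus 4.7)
The plan is to prove this by contradiction: assume $x \neq 0$ and $x^2 = 0$, then use weak fadelianity to produce a pair of elements that satisfy the hypothesis of \Cref{lemma-first-integral} in a way that forces a contradiction.

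First, I would apply weak fadelianity to $x$ (nonzero by assumption) to obtain $b, c \in R$ with $1 = xb + cx$. Multiplying this identity on the left by $x$ and using $x^2 = 0$ yields $x = xcx$; multiplying on the right by $x$ and using $x^2 = 0$ yields $x = xbx$. These two ``quasi-inverse'' identities are the engine of the proof.

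Next, I would introduce $u = xb$ and $v = cx$, so that $u + v = 1$. Using $x = xbx$, one computes $u^2 = xbxb = (xbx)b = xb = u$, so $u$ is idempotent; symmetrically $v^2 = v$. From $u + v = 1$ we get $uv = u(1-u) = u - u^2 = 0$, and directly $vu = cx \cdot xb = c x^2 b = 0$. Thus $u$ and $v$ are a pair of orthogonal idempotents summing to $1$ with $uv = vu = 0$.

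Applying \Cref{lemma-first-integral} to $u$ and $v$ gives $u^2 = 0$ or $v^2 = 0$; since $u^2 = u$ and $v^2 = v$, this forces $u = 0$ or $v = 0$, i.e., $xb = 0$ or $cx = 0$. In the first case, the identity $1 = xb + cx$ reduces to $cx = 1$, and then multiplying $x^2 = 0$ on the left by $c$ yields $x = (cx)x = 0$, contradicting $x \neq 0$. The symmetric argument handles the case $cx = 0$. The main (very minor) obstacle is just noticing the trick of forming the idempotents $xb$ and $cx$; everything else is routine manipulation of the weak fadelian identity together with $x^2 = 0$.
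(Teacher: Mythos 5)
Your proof is correct and follows essentially the same route as the paper: both form the idempotents $xb$ and $cx$ from the weak fadelian identity $1 = xb + cx$, show their products vanish, apply \Cref{lemma-first-integral} to conclude $xb=0$ or $cx=0$, and derive a contradiction with $x^2=0$. The only cosmetic difference is that you obtain idempotency via the identities $x=xbx$ and $x=xcx$ rather than by expanding $cx\cdot 1$ and $xb\cdot 1$ directly, which changes nothing of substance.
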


\begin{proof}
  Assume by contradiction that $x$ is nonzero.
  Write:
  \begin{equation}
    \label{lemma-second-integral-eq-fad}
    1 = xb + cx
  \end{equation} 
  for some $b,c \in R$.
  Notice that:
    \[ cx = cx \cdot 1 = cx(xb+cx) = c(x^2) b + (cx)^2 = (cx)^2. \]
  Similarly, we have $xb = (xb)^2$.
  Since $xb = 1 - cx$, we know that $xb$ and $cx$ commute.
  Hence:
    \[ (xb)(cx) = (cx)(xb) = c (x^2) b = 0. \]
  By \Cref{lemma-first-integral}, we have $(xb)^2=0$ or $(cx)^2 = 0$, and thus $xb=0$ or $cx=0$.
  Assume for example that $xb=0$.
  \Cref{lemma-second-integral-eq-fad} becomes $1 = cx$, so $x$ is invertible, which contradicts $x^2 = 0$.
\end{proof}

We finally prove \Cref{weakly-fad-is-integral}:
\begin{proof}[Proof of \Cref{weakly-fad-is-integral}]
  Let $x,y \in R$ such that $xy = 0$.
  Then $(yx)^2 = y(xy)x = 0$, which implies $yx=0$ by \Cref{lemma-second-integral}.
  By \Cref{lemma-first-integral}, we deduce from $xy=yx=0$ that either $x^2 = 0$ or $y^2 = 0$.
  Applying \Cref{lemma-second-integral} again, we see that either $x$ or $y$ is zero.
\end{proof}

The Ore condition is a well-studied condition, equivalent to the existence of a ring of fractions unique up to isomorphism \autocite{ore}.
It is weaker than Noetherianity (\autocite[Theorem 1]{goldie}, \autocite[Corollary 6.7]{goodearl}).
We recall the definition:
\begin{definition}
  \label{defn:ore}
  The integral ring $R$ is right (resp. left) Ore if any two nonzero right (resp. left) ideals have a nonzero intersection.
\end{definition}

The Ore condition interacts with fadelianity in the following way:
\begin{theorem}
  \label{wf-ore-is-fad}
  If the weakly fadelian ring $R$ is right Ore, it is fadelian.
\end{theorem}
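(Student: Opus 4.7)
The plan is to apply weak fadelianity not to $a$ directly, but to a carefully chosen auxiliary element $v$ produced via the right Ore condition. The idea is to force $v \in Ra$ and $xv \in aR$ simultaneously; the weak fadelianity decomposition of $1$ in terms of $v$ will then split $x$ cleanly into an element of $aR$ plus an element of $Ra$.

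To construct $v$, I would first dispose of the trivial case $x = 0$ (where $b = c = 0$ works) and assume $x \neq 0$. Applying the right Ore condition to the nonzero right ideals $xR$ and $aR$ produces nonzero $r, s \in R$ with $xr = as$. I would then set $v := ra$, which is nonzero by integrality (\Cref{weakly-fad-is-integral}). This $v$ has the two desired properties: $v \in Ra$ by construction, and $xv = (xr)a = a(sa) \in aR$. Applying weak fadelianity to $v$ yields $1 = v b_0 + c_0 v$ for some $b_0, c_0 \in R$, and multiplying $x = x \cdot 1$ gives
\[ x = xv b_0 + xc_0 v = a(sa b_0) + (xc_0 r)\, a, \]
so that $b := sa b_0$ and $c := xc_0 r$ realize $x = ab + ca$.

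The main obstacle is identifying the correct element on which to apply weak fadelianity. Applying it directly to $a$ would give $1 = a b_0 + c_0 a$ and hence $x = (xa) b_0 + (xc_0) a$; the second summand lies in $Ra$, but the first need not lie in $aR + Ra$, and it is unclear how to handle it. The key idea is to instead pick $v = ra$ using an $r$ extracted from the Ore relation $xr = as$: the factor $a$ on the right places $v$ in $Ra$, while multiplying $xr \in aR$ on the right by $a$ gives $xv \in aR$. Once $v$ is in hand, the rest is a one-line calculation.
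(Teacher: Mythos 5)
Your proof is correct and follows essentially the same route as the paper: use the right Ore condition to get nonzero $r,s$ with $xr=as$, apply weak fadelianity to the element $ra$ (nonzero by integrality, \Cref{weakly-fad-is-integral}), and expand $x = x\cdot 1$ to land in $aR + Ra$. Up to notation (the paper writes the Ore relation as $ab=xc$ and applies weak fadelianity to $ca$), the two arguments coincide.
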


\begin{proof}
  Assume $R$ is right Ore.
  Let $x,a \in R\setminus \{0\}$.
  Since $R$ is right Ore, there exist nonzero elements $b,c \in R$ such that:
    \[ ab = xc. \]

  We have $ca \neq 0$ by \Cref{weakly-fad-is-integral}.
  Since $R$ is weakly fadelian, there exist $k,k' \in R$ such that:
  \[ 1 = cak + k' c a \]

  Finally:
    \[ x = x \cdot 1 = xcak + xk'ca = abak + xk'ca \in aR + Ra. \]

  This proves that $R$ is fadelian.
\end{proof}

In \Cref{cor:non-ore-fad}, we will see that \Cref{wf-ore-is-fad} is not an equivalence.

\section{Fadelianity and differential algebras}
\label{sn:diff-alg}

In this section, we consider differential algebras.
We give conditions (both necessary and sufficient) for \emph{formal differential operator rings} in the sense of \autocite[Chapter 2]{goodearl} to be fadelian.

\subsection{Differential algebras}

\begin{definition}
  Let $k$ be a commutative field and $R$ be a central $k$-algebra.
  A \emph{derivation} of $R$ is a nonzero $k$-linear map $\delta : R \rightarrow R$ such that for all $a,b \in R$:
    \[ \delta(ab) = \delta(a)b + a\delta(b). \]
  We say that $(R, \delta)$ is a differential algebra.
\end{definition}

We fix a differential algebra $(R, \delta)$.
Note that $\delta(1) = \delta(1 \cdot 1) = 2\delta(1)$ and hence $\delta(1)$ is always equal to $0$.
In particular, $\delta$ is non-invertible.

\begin{definition}
  For every $x \in R$, let $\tilde x \in \End(R)$ be the left multiplication endomorphism:
    \[ \tilde x : y \mapsto xy. \]
  We denote by $R[\delta]$ the subalgebra of $\End(R)$ generated by $\delta$ and the endomorphisms $\tilde x$ for $x \in R$.
\end{definition}

An introduction to algebras of formal differential operators can be found in \autocite[Chapter 2]{goodearl}.
These are particular cases of \emph{Ore extensions}.

\begin{remark}
  Since $R[\delta]$ contains the nonzero non-invertible endomorphism $\delta$, it is never a division ring.
\end{remark}

The map $x \mapsto \tilde x$ is an embedding of $R$ into $R[\delta]$.
We see $R$ as a subalgebra of $R[\delta]$, i.e. we identify elements $x \in R$ with their associated left multiplication endomorphism $\tilde x \in R[\delta]$.
To avoid any confusion between $\delta x = \delta \circ {\tilde x}$ and $\delta(x) \in R$, we now solely use the notation $x'$ when evaluating $\delta$ at an element $x\in R$.
By $x^{(n)}$, we mean $\delta^n(x)$.

\begin{remark}
  Let $x \in R$.
  The identity $(xy)' = x'y + xy'$ for $y \in R$ gives the equality $\delta x = x' + x\delta$ of endomorphisms in $R[\delta]$, which can be rewritten using the bracket $[a,b]=ab-ba$:
    \[ [\delta,x] = x'. \]
  In particular, the ring $R[\delta]$ is non-commutative: no $x \in R\setminus\Ker(\delta)$ commutes with $\delta$.
\end{remark}

\subsection{Necessary conditions for $R[\delta]$ to be fadelian}
\label{subsn:necc-cond-diffalg}

\begin{proposition}
  \label{necc-cond-Rdelta-fad}
  Let $(R,\delta)$ be a differential algebra such that $R[\delta]$ is weakly fadelian.
  Then:
  \begin{itemize}
    \item $R$ is a division ring;
    \item $R[\delta]$ is fadelian;
    \item every nonzero element of $R[\delta]$ is surjective as an endomorphism of $R$.
  \end{itemize}
\end{proposition}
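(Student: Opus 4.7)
The strategy is to extract all three conclusions from a single identity in $R$. For any nonzero $P \in R[\delta]$ and any nonzero $y \in R$, the element $yP\delta \in R[\delta]$ is nonzero by integrality (\Cref{weakly-fad-is-integral}), so weak fadelianity of $R[\delta]$ provides $b, c \in R[\delta]$ with $1 = (yP\delta) \cdot b + c \cdot (yP\delta)$ in $R[\delta]$. Viewing both sides as endomorphisms of $R$ and evaluating at $1 \in R$, the second summand vanishes because $(yP\delta)(1) = y P(\delta(1)) = y P(0) = 0$, while the first equals $(yP\delta)(b(1)) = y P((b(1))')$. This yields
\[
    y \cdot P((b(1))') = 1 \quad \text{in } R.
\]
The only creative step in the whole proof is precisely this right-composition with $\delta$, which turns an arbitrary nonzero operator into one annihilating $1 \in R$; everything else will be mechanical.

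Specializing to $P = 1$, the displayed identity says that every nonzero $y \in R$ admits a right inverse in $R$. Since $R$ is integral as a subring of $R[\delta]$, a right inverse of a nonzero element is automatically two-sided, so $R$ is a division ring (first bullet). Returning to general nonzero $P$: for any nonzero $z \in R$, now that $R$ is a division ring we may apply the identity with $y = z^{-1}$, which rearranges to $z = P((b(1))') \in P(R)$; combined with $P(0) = 0$, this proves $P(R) = R$ (third bullet).

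For the middle bullet, once $R$ is known to be a division ring, the differential operator ring $R[\delta]$ admits right Euclidean division by any nonzero element (cancelling the leading $\delta$-coefficient of the divisor is possible because it lies in $R^\times$). In particular $R[\delta]$ is right Noetherian and thus right Ore, so \Cref{wf-ore-is-fad} upgrades its weak fadelianity to fadelianity.
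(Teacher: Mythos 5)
Your proof is correct, and it genuinely reorganizes the argument relative to the paper's. The central trick is the same in both: compose the operator with $\delta$ on the right so that evaluating a weak-fadelianity identity at $1\in R$ annihilates one of the two summands (the paper uses $a\delta$, you use $yP\delta$). The difference lies in the third bullet: the paper first proves that $R[\delta]$ is fadelian (division ring $\Rightarrow$ Noetherian $\Rightarrow$ Ore, then \Cref{wf-ore-is-fad}) and only then applies full fadelianity with target $a$ to $u\delta$; you bypass fadelianity by scaling on the left with $y=z^{-1}$, available once $R$ is a division ring, so bullets one and three both fall out of a single weak-fadelianity identity and the second bullet comes last --- a nicer dependency structure. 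For the second bullet you replace the paper's citation of Goodearl's Noetherianity theorem by a direct Euclidean-division argument; the conclusion is right, but note that at this point you do not yet know that an element of $R[\delta]$ has a \emph{unique} expression $\sum_i a_i\delta^i$ (that uniqueness is \Cref{lem:ass-polyn} and uses surjectivity), so ``the leading $\delta$-coefficient'' of an element of $R[\delta]$ is not obviously well defined. This is easily repaired: either invoke the surjectivity bullet you have already proved, so that the uniqueness argument of \Cref{lem:ass-polyn} applies and degrees make sense, or simply observe that $R[\delta]$ is a homomorphic image of the Ore extension $R[X;\delta]$, which is right Euclidean (hence right Noetherian) over a division ring, and quotients of right Noetherian rings are right Noetherian --- which is in substance what the paper's citation accomplishes as well. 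Your explicit remark that a right inverse in the domain $R$ is automatically two-sided is a precision the paper leaves implicit.
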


This proposition gives necessary conditions on the differential algebra $(R,\delta)$ for $R[\delta]$ to be fadelian.
It also means that this construction cannot give examples of weakly fadelian rings which are not fadelian.

\begin{proof}[Proof of \Cref{necc-cond-Rdelta-fad}]
  Let $a$ be a nonzero element of $R$.
  In $R[\delta]$, write by weak fadelianity:
    \[ 1 = ba\delta + a\delta c \]
  with $b,c \in R[\delta]$.
  Evaluate this equality of endomorphisms at $1 \in R$:
    \[ 1 = b(a \cdot 1') + a \cdot (c(1))' = a \cdot (c(1))'. \]
  The element $(c(1))' \in R$ is an inverse of $a$ in $R$.
  This shows that $R$ is a division ring.

  In particular, $R$ is right Noetherian.
  By \autocite[Theorem 2.6]{goodearl}, $R[\delta]$ is right Noetherian too.
  In particular, $R[\delta]$ is Ore by \autocite[Theorem 1]{goldie}.
  Finally, \Cref{wf-ore-is-fad} implies that $R[\delta]$ is fadelian.

  Now, consider a nonzero element $u \in R[\delta]$.
  Let $a \in A$.
  By fadelianity of $R[\delta]$, write:
    \[ a = d u\delta + u\delta e \]
  with $d,e \in R[\delta]$.
  Evaluate this equality of endomorphisms at $1 \in R$ to obtain:
    \[ a = d(u(1')) + u(e(1)') = u(e(1)') \in \Ima(u). \]
  This proves the surjectivity of $u$.
\end{proof}

\subsection{A criterion for the fadelianity of $R[\delta]$}
\label{subsn:suff-cond-diffalg}

The main theorem of this subsection is the following:
\begin{theorem}
  \label{crit-Rdelta-fad-comm}
  Let $(R,\delta)$ be a differential algebra with $R$ commutative.
  The ring $R[\delta]$ is weakly fadelian if and only if $R$ is a field and every nonzero element of $R[\delta]$ is surjective as an endomorphism of $R$.
\end{theorem}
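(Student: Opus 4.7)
The forward implication is immediate from \Cref{necc-cond-Rdelta-fad} combined with the fact that a commutative division ring is a field.

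For the converse, suppose $R$ is a field and every nonzero element of $R[\delta]$ is surjective on $R$. Given a nonzero $u = \sum_{i=0}^n a_i \delta^i \in R[\delta]$ with $a_n \neq 0$, the plan is to construct $b, c \in R[\delta]$ of degree at most $n-1$ satisfying $ub + cu = 1$. Taking the ansatz $b = \sum_{i=0}^{n-1} b_i \delta^i$ and $c = \sum_{i=0}^{n-1} c_i \delta^i$ with unknowns $b_i, c_i \in R$, and expanding $ub + cu$ via the commutation rule $\delta^k r = \sum_{j=0}^{k} \binom{k}{j} r^{(j)} \delta^{k-j}$ together with the commutativity of $R$, the equation $ub + cu = 1$ translates into a system of $2n$ equations in the $2n$ unknowns, one per coefficient of $\delta^l$ for $0 \leq l \leq 2n-1$.

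Inspection of the leading terms shows that the $n$ top equations ($l \geq n$) form an upper-triangular system over $R$ with the invertible element $a_n$ on the diagonal: the equation for $l = n + m$ involves only $c_i$ with $i \geq m$, with coefficient $a_n$ in front of $c_m$. This allows solving successively for $c_{n-1}, c_{n-2}, \ldots, c_0$ as $R$-linear combinations of the $b_i$'s and their derivatives, using only the invertibility of $a_n$. Substituting into the remaining $n$ equations produces a system of linear ordinary differential equations in $b_0, \ldots, b_{n-1}$, which I would resolve by choosing the $b_i$'s one at a time: at each step the relevant equation takes the form $P(b_i) = \psi_i$ for some nonzero $P \in R[\delta]$ whose leading term is ultimately controlled by $a_n$, and some already-known $\psi_i \in R$; the surjectivity hypothesis, applied to $P$, then produces $b_i$.

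The main obstacle is the bookkeeping required to verify at each step that the operator $P$ is nonzero (so that the surjectivity hypothesis applies) and that the iteration closes consistently; both should follow from careful tracking of the leading-coefficient structure, inherited from the invertibility of $a_n$ and the shape of the commutation rule.
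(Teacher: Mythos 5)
Your setup matches the paper's: same ansatz $b,c$ of degree at most $n-1$, same expansion into $2n$ coefficient equations, and the elimination of half the unknowns using the top $n$ equations (you eliminate the $c_i$'s, the paper eliminates the $b_i$'s; this is only a cosmetic difference). The forward direction is handled the same way. Note in passing that "identifying coefficients of $\delta^l$" already requires knowing that the representation $u=\sum a_i\delta^i$ is unique, which is not free: in the paper this is \Cref{lem:ass-polyn} and it uses the surjectivity hypothesis.

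The genuine gap is in your last step. After substitution, the remaining $n$ equations do \emph{not} decouple into equations of the form $P(b_i)=\psi_i$ with $\psi_i$ already known: each of them involves \emph{all} of $b_0,\dots,b_{n-1}$ through differential operators, so you cannot choose the $b_i$ "one at a time" and invoke surjectivity of a single operator. What the leading-coefficient bookkeeping actually gives is weaker than triangularity: the operator acting on $b_d$ in the $d$-th equation has degree exactly $n$ (leading coefficient governed by $a_n$), while the off-diagonal operators have degree at most $n-1$ — a \emph{diagonally dominant} but fully coupled linear system over $R[\delta]$. Solving it is the real content of the sufficiency proof, and the surjectivity hypothesis alone does not do it; one must first decouple the system. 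The paper does this with a dedicated result (\Cref{lemma-domdiag-Rdelta}): a Gaussian-elimination procedure over the noncommutative ring $R[\delta]$, using left and right Euclidean division (\Cref{lem:eucdiv}, which in turn needs the degree theory of $R[\delta]$), to transform the system into a diagonal one; the diagonal dominance is then used, via a degree estimate, to show the resulting diagonal operators are nonzero, and only at that point does surjectivity of single operators finish the argument. Your sketch defers exactly this to "careful tracking of the leading-coefficient structure", but the sequential scheme you propose would break down as soon as a later equation constrains an already-chosen $b_i$, which it generically does; you need the elimination lemma (or an equivalent decoupling argument) to complete the proof.
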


\Cref{crit-Rdelta-fad-comm} gives the first interesting examples of fadelian rings:
\begin{corollary}
  \label{diff-closed-field-Rdelta-fad}
  If $(k, \delta)$ is a differentially closed field with $\delta \neq 0$, the ring $k[\delta]$ is fadelian and is not a division ring.
\end{corollary}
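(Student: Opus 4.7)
The plan is to apply \Cref{crit-Rdelta-fad-comm}. Since $k$ is a differentially closed field, it is in particular a (commutative) field, so the first condition is automatic. The non-division-ring claim also comes for free from the remark made earlier in \Cref{sn:diff-alg}: the endomorphism $\delta \in k[\delta]$ is nonzero (since $\delta \neq 0$ in $k$ by hypothesis) yet non-invertible (since $\delta(1) = 0$, it is not even injective). Hence everything reduces to verifying the surjectivity condition: every nonzero $P \in k[\delta]$ is surjective as an endomorphism of $k$.

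Any such $P$ may be written uniquely as $P = \sum_{i=0}^{n} a_i \delta^i$ with $a_i \in k$ and $a_n \neq 0$. For $y \in k$, one has $P(y) = \sum_{i=0}^{n} a_i y^{(i)}$, so the surjectivity of $P$ amounts to asking that the inhomogeneous linear differential equation
\[ a_n y^{(n)} + a_{n-1} y^{(n-1)} + \cdots + a_1 y' + a_0 y = b \]
admits a solution $y \in k$ for every $b \in k$.

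This is where differential closure enters: by its defining property, any system of differential polynomial equations over $k$ which is consistent (i.e.\ has a solution in some differential field extension of $k$) already has a solution in $k$ itself. So the plan is to exhibit a differential field extension $K/k$ in which the displayed equation has a solution; the differential closure hypothesis then returns a solution in $k$, finishing the proof.

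The only nontrivial point is therefore the existence of such an extension $K$. This is standard: one may either argue by Picard--Vessiot theory (every linear differential equation over a differential field admits a full solution space in a suitable extension), or give a hands-on construction by iteratively adjoining antiderivatives, starting from any particular solution of the homogeneous part plus a formal integration. I expect this existence step to be the only potential obstacle, and it is resolved by invoking a classical construction rather than any new computation.
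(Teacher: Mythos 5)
Your proof is correct and takes essentially the same route the paper intends: the corollary is a direct application of \Cref{crit-Rdelta-fad-comm} together with the earlier remark that $\delta$ is a nonzero non-invertible endomorphism (so $k[\delta]$ is never a division ring), the only real content being that differential closedness makes every nonzero operator $\sum_i a_i\delta^i$ surjective on $k$, i.e.\ that the linear equation $\sum_i a_i y^{(i)}=b$ is solvable in $k$ (the order-zero case being just field division). Your detour through an auxiliary extension is legitimate and the needed extension does exist (e.g.\ the fraction field of $k\{Y\}/[f]$ for the linear differential polynomial $f$, which is prime), though with the usual Blum-style axioms for differentially closed fields solvability in $k$ is immediate without this step; note also that \Cref{crit-Rdelta-fad-comm} literally gives weak fadelianity, which is upgraded to fadelianity via \Cref{necc-cond-Rdelta-fad}.
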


Differentially closed fields and the existence of differential closures for fields were considered in \autocite{robinson}.
See \autocite[Theorem 6.4.10]{modelth} for a modern approach, and for details about the model theory of differential fields.

The direct implication in \Cref{crit-Rdelta-fad-comm} follows from \Cref{necc-cond-Rdelta-fad}.
In the next paragraphs, \textbf{we assume that $R$ is a differential field and that every nonzero endomorphism in $R[\delta]$ is surjective}, and we introduce tools and results useful for the proof that $R[\delta]$ is fadelian.

\paragraph{Associated polynomial.}

\begin{notation}
  \label{not:pdelta}
  If $P = \sum_{i=0}^d a_i X^i \in R[X]$ is a polynomial, we denote by $P(\delta)$ the element:
  \[ \sum_{i=0}^d a_i \delta^i \in R[\delta]. \]
\end{notation}

\begin{lemma}
  \label{lem:ass-polyn}
  Let $u \in R[\delta]$.
  There is a unique polynomial $P \in R[X]$ such that $u = P(\delta)$.
\end{lemma}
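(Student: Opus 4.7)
My plan is to prove existence and uniqueness separately, since they require very different tools.

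For existence, I consider the subset $S \subseteq R[\delta]$ of endomorphisms admitting a representation of the form $\sum_{i=0}^d a_i \delta^i$ with $a_i \in R$, and argue that $S$ is already a subalgebra of $\End(R)$ containing $R$ and $\delta$. Closure under addition and scalar multiplication is immediate; the only nontrivial check is closure under composition, which follows from the iterated commutation relation
\[
  \delta^i \tilde x = \sum_{k=0}^i \binom{i}{k} \widetilde{x^{(i-k)}} \delta^k,
\]
itself obtained by induction on $i$ from $[\delta, \tilde x] = \widetilde{x'}$, together with the commutativity of $R$ to merge products like $a_i \widetilde{b_j^{(i-k)}}$ into a single element of $R$. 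Since $R[\delta]$ is by definition the smallest subalgebra containing $R$ and $\delta$, we get $S = R[\delta]$.

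For uniqueness, suppose for contradiction that some nonzero polynomial $P = \sum_{i=0}^d a_i X^i$ with $a_d \neq 0$ satisfies $P(\delta) = 0$, choosing $d$ minimal. Evaluating at $1 \in R$ and using $\delta^i(1) = 0$ for $i \geq 1$ forces $a_0 = P(\delta)(1) = 0$, so I may factor $P(\delta) = Q(\delta) \circ \delta$ where $Q(X) = \sum_{i=1}^d a_i X^{i-1}$ has degree $d-1$ and leading coefficient $a_d \neq 0$. The standing assumption now enters: since $\delta$ is a nonzero element of $R[\delta]$, it is surjective as an endomorphism of $R$, so $Q(\delta) \circ \delta = 0$ forces $Q(\delta) = 0$. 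This contradicts the minimality of $d$, with the base case $d = 1$ producing the direct contradiction $Q(\delta) = a_1 = 0$.

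The step that needs real input is uniqueness: the freeness of $R[\delta]$ on $\{\delta^i\}$ as a left $R$-module is not a purely formal consequence of its construction, since the evaluation map from the abstract Ore extension $R\langle X; \delta\rangle$ into $\End(R)$ could a priori have a kernel if the action of $R[\delta]$ on $R$ were too degenerate. The standing hypothesis that every nonzero element of $R[\delta]$ is surjective as an endomorphism of $R$ is precisely what rules this out, by letting the degree-reduction argument above go through.
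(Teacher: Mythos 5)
Your proof is correct, and its uniqueness half takes a genuinely different route from the paper's. For existence you do essentially what the paper does --- push $\delta$ to the right using $\delta \tilde x = \tilde x \delta + \widetilde{x'}$ and its iterate (Leibniz) --- just packaged as the observation that the operators of the form $\sum_i a_i\delta^i$ form a subalgebra of $\End(R)$ containing $R$ and $\delta$; note that the merge $\tilde a \circ \tilde b = \widetilde{ab}$ holds for any ring, so the appeal to commutativity there is superfluous (harmless, since in this subsection $R$ is a field). For uniqueness, the paper invokes the standing surjectivity hypothesis for every power $\delta^n$: it picks $x_n$ with $\delta^n(x_n)=1$ and extracts the coefficients inductively, using $\delta^i(x_k)=\delta^{i-k}(1)=0$ for $i>k$. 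You instead take a nonzero annihilating polynomial of minimal degree (applied to $P-Q$, by additivity of $P\mapsto P(\delta)$, this yields uniqueness), kill $a_0$ by evaluating at $1$, factor $P(\delta)=Q(\delta)\circ\delta$, and cancel $\delta$ on the right using only the surjectivity of $\delta$ itself; the degree-zero case is already settled by the evaluation step, so the descent terminates. Your argument is shorter and isolates a weaker instance of the hypothesis (surjectivity of the single element $\delta$, rather than of all nonzero elements of $R[\delta]$), which makes clearer exactly what uniqueness depends on; the paper's induction, in exchange, gives an explicit recipe recovering each coefficient of $u$ from the values $u(x_k)$, in the same evaluation-based spirit as the rest of the section. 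Your closing remark --- that freeness of $R[\delta]$ over $\{\delta^i\}$ as a left $R$-module is not formal and could fail if the action on $R$ were degenerate --- is an accurate diagnosis of where real input is needed.
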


\begin{proof}
\begin{itemize}
  \item
    Using the equality $\delta a = a \delta + a'$ repeatedly, one can make sure all occurrences of $\delta$ are on the right side of products.
    This proves the existence of $P$.

  \item
    Let us prove the uniqueness of $P$.
    By hypothesis, $\delta^n \in R[\delta]\setminus\{0\}$ is surjective.
    We fix elements $x_n \in R$ such that $\delta^n(x_n) = 1$.
    Assume $P(\delta) = Q(\delta)$ for some $P,Q \in R[\delta]$.
    Write:
      \[ P = \sum_{i=0}^d p_i X^i \andd Q = \sum_{i=0}^{d'} q_i X^i. \]
    We show inductively that $p_i = q_i$ for all $i \in \Zz$.
    First, evaluate the equality $P(\delta)=Q(\delta)$ at $1 \in R$ to obtain $p_0 = q_0$.
    Now assume $p_i = q_i$ for all $i<k$.
    We have:
    \begin{align*}
      P(\delta)(x_k) - \sum_{i=0}^{k-1} p_i \delta^i(x_k) & = \sum_{i \geq k} p_i \delta^i(x_k) = p_k \\
      & = Q(\delta)(x_k) - \sum_{i=0}^{k-1} q_i \delta^i(x_k) = \sum_{i \geq k} q_i \delta^i(x_k) = q_k.
    \end{align*}
    So $p_k = q_k$. This concludes the proof.
\end{itemize}
\end{proof}

\paragraph{Degree and Euclidean division.}

If $u \in R[\delta]$, let $P \in R[X]$ be the associated polynomial from \Cref{lem:ass-polyn}.
We define the degree of $u$:
  \[ \theta(u) \eqdef \deg(P). \]

\begin{lemma}
  \label{lem:eucdiv}
  The map $\theta : R[\delta] \rightarrow \Zz \cup \{-\infty\}$ is a Euclidean valuation for which $R[\delta]$ admits left and right Euclidean division.
\end{lemma}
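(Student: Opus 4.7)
The plan is to adapt the classical argument that a polynomial ring over a field is Euclidean, the only new ingredient being control over the noncommutativity. All such control comes from the identity $\delta a = a\delta + a'$ for $a \in R$: pushing $\delta^k$ past an element of $R$ yields a leading $a\delta^k$ term plus corrections of strictly smaller degree in $\delta$. This principle is the single computational fact I will use throughout.

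My first step would be to prove multiplicativity: $\theta(uv) = \theta(u) + \theta(v)$. For $u = \sum_{i=0}^m a_i \delta^i$ with $a_m \neq 0$ and $v = \sum_{j=0}^n b_j \delta^j$ with $b_n \neq 0$, I would expand $uv$ using the commutation relation and check that the $\delta^{m+n}$-coefficient of the associated polynomial of $uv$ (cf.\ \Cref{lem:ass-polyn}) is exactly $a_m b_n$. Since $R$ is a field, this product is nonzero, giving the equality. Subadditivity $\theta(u+v) \leq \max(\theta(u), \theta(v))$ is immediate from uniqueness of the associated polynomial and the fact that $(P+Q)(\delta) = P(\delta) + Q(\delta)$.

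Next I would establish right Euclidean division by induction on $\theta(u)$. If $\theta(u) < \theta(v)$, take quotient $0$ and remainder $u$. Otherwise, with notation as above and $m \geq n$, the element $a_m b_n^{-1} \delta^{m-n} \cdot v$ has leading $\delta$-term $a_m \delta^m$ by the commutation calculation, so subtracting it from $u$ strictly reduces the degree; the inductive hypothesis applied to the difference then yields the required quotient and remainder. For left Euclidean division, the symmetric trick is to subtract $v \cdot c \delta^{m-n}$ with $c = b_n^{-1} a_m$, whose leading $\delta$-term is $b_n c \delta^m = a_m \delta^m$.

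The main obstacle is really just the multiplicativity of $\theta$, which reduces to computing the leading $\delta$-coefficient of a product; everything else transcribes the standard Euclidean algorithm. The commutativity of $R$ hypothesized in \Cref{crit-Rdelta-fad-comm} is used essentially to form both $a_m b_n^{-1}$ on the left and $b_n^{-1} a_m$ on the right, and to guarantee that the product of nonzero leading coefficients stays nonzero.
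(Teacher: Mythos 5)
Your proposal is correct and follows essentially the same route as the paper: cancel the leading term of $x$ by subtracting $x_s y_r^{-1}\delta^{s-r}\cdot y$ (computed via the commutation relation/Leibniz formula) and induct on $\theta$, with the symmetric quotient-on-the-other-side trick for the second division, which the paper only indicates with ``similarly''. Your explicit verification of multiplicativity of $\theta$ is a harmless addition (the paper leaves the valuation property implicit), and the only discrepancy is terminological: what you call right division the paper calls left division.
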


\begin{proof}
  Let $x,y \in R[\delta]$ with $y \neq 0$.
  We prove left Euclidean division by induction on $\theta(x)$.
  If $\theta(x) < \theta(y)$, we simply write $x = 0y + x$.
  Otherwise, write:
    \[ y = \sum_{i=0}^r y_i \delta^i \andd x = \sum_{i=0}^{s} x_i \delta^i \]
  with $r = \theta(y), s = \theta(x)$.
  It follows that $y_r, x_s \neq 0$ and $s \geq r$.
  Define:
    \[ \alpha = x_s y_r^{-1} \delta^{s-r}. \]

  We compute:
  \begin{align*}
    \alpha y & = x_s y_r^{-1} \sum_{i=0}^r \delta^{s-r} y_i \delta^i & \\
    & = x_s y_r^{-1} \sum_{i=0}^r \left ( \sum_{k=0}^{s-r} \binom{s-r}{k} y_i^{(s-r-k)} \delta^{k+i} \right ) & \text{by Leibniz's formula}
  \end{align*}

  In particular, the polynomial $Z$ such that $\alpha y = Z(\delta)$ has leading term:
    \[ x_s y_r^{-1} \binom{s-r}{s-r} y_r^{(s-r-s+r)} \delta^{s-r+r} = x_s y_r^{-1} y_r \delta^s = x_s \delta^s. \]

  Hence, $\theta(x-\alpha y) < \theta(x)$ as the leading terms cancel.
  By induction hypothesis, we may write:
    \[ x - \alpha y = qy+r \]
  with $\theta(r) < \theta(y)$.
  We conclude by writing:
    \[ x = (\alpha + q) y + r. \]

  Right Euclidean division is proved similarly.
\end{proof}

\paragraph{Diagonally dominant systems in differential fields.}

\begin{lemma}
  \label{lemma-domdiag-Rdelta}
  Consider a system of equations of the form:
    \[ \forall j \in \{0, \ldots, r\}, \, \sum_{i=0}^r P_{i,j} (b_i) = x_i \]
  in the indeterminates $b_0, \ldots, b_r$, where $x_i \in R$ and $P_{i,j} \in R[\delta]$.
  Assume moreover:
    \[ \theta(P_{i,i}) > \max_{j \neq i} \theta(P_{i,j}). \]
  Then the system admits a solution $(b_0, \ldots, b_r) \in R^r$.
\end{lemma}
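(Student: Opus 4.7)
The natural plan is to proceed by induction on $r$. In the base case $r = 0$, the equation $P_{0,0}(b_0) = x_0$ has a solution because $P_{0,0}$ must be nonzero (as $\theta(P_{0,0}) > -\infty$) and hence surjective on $R$ by our standing assumption that every nonzero element of $R[\delta]$ is surjective as an endomorphism of $R$.

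For the inductive step, the plan is to eliminate the variable $b_r$ from equations $0, \ldots, r-1$, reducing to a size-$r$ system to which the induction hypothesis applies. Since $R[\delta]$ is left and right Euclidean by \Cref{lem:eucdiv}, it satisfies the left Ore condition, so for each $j < r$ there exist nonzero $A_j, B_j \in R[\delta]$ with $A_j P_{r,r} = B_j P_{r,j}$; I choose them to realize the left LCM of $P_{r,r}$ and $P_{r,j}$, ensuring $\theta(A_j) + \theta(P_{r,r}) = \theta(B_j) + \theta(P_{r,j})$. Replacing equation $j$ by the combination $B_j \cdot (\mathrm{eq}~j) - A_j \cdot (\mathrm{eq}~r)$ yields
\[ \sum_{i<r} (B_j P_{i,j} - A_j P_{i,r})(b_i) = B_j(x_j) - A_j(x_r), \]
in which the coefficient of $b_r$ vanishes.

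The crucial verification is that the reduced size-$r$ system in $b_0, \ldots, b_{r-1}$, with coefficients $P'_{i,j} = B_j P_{i,j} - A_j P_{i,r}$, still satisfies the diagonal dominance hypothesis $\theta(P'_{i,i}) > \theta(P'_{i,j})$ for $j \neq i$. The commutativity of $R$ is essential here since it ensures the degree of a product in $R[\delta]$ is additive (the leading coefficients in the field $R$ cannot annihilate). Using the degree identity $\theta(B_j) - \theta(A_j) = \theta(P_{r,r}) - \theta(P_{r,j}) > 0$ together with the original diagonal dominance, one derives bounds such as $\theta(P'_{i,j}) \leq \max(\theta(B_j) + \theta(P_{i,j}),\, \theta(A_j) + \theta(P_{i,r}))$, and verifies the desired inequality. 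This verification is the main technical obstacle and hinges on the degrees of the LCM multipliers and the left-gcds of the $P_{r,j}$'s.

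Once the reduced system is solved by the inductive hypothesis, equation $r$ (namely $P_{r,r}(b_r) = x_r - \sum_{i<r} P_{i,r}(b_i)$) yields $b_r$ by surjectivity of $P_{r,r}$, completing the induction. Should the straightforward LCM-based approach fail to preserve diagonal dominance in all cases, a more refined strategy---such as a carefully chosen order for the eliminations or an induction on an alternative invariant like the total diagonal degree $\sum_i \theta(P_{i,i})$---could be needed.
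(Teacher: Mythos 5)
Your approach differs from the paper's, and as written it has two genuine gaps. The first is the elimination step itself: replacing equation $j$ (for $j<r$) by $B_j\cdot(\text{eq }j)-A_j\cdot(\text{eq }r)$ amounts to left-composing equation $j$ with the operator $B_j$, which is not a unit of $R[\delta]$ and has a nontrivial kernel as an endomorphism of $R$ (already $\ker\delta\neq 0$). So the transformed system is strictly weaker than the original: from $B_j\bigl(\sum_i P_{i,j}(b_i)-x_j\bigr)=0$ together with equation $r$ you cannot conclude $\sum_i P_{i,j}(b_i)=x_j$, and the tuple produced by your induction plus back-substitution need not solve the original system. The paper's reduction avoids this entirely by using only invertible operations --- adding to a row a left multiple of another row, adding to a column a right multiple of another column, and swaps, with \Cref{lem:eucdiv} used to make degrees drop --- so its final diagonal system is genuinely equivalent to the original one.

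The second gap is the step you yourself flag: preservation of diagonal dominance fails for the LCM-based choice you prescribe. Take $r=2$ with $P_{2,2}=\delta^{10}$, $P_{2,0}=\delta^{9}$, $P_{2,1}=1$, $\theta(P_{0,0})=5$, $\theta(P_{0,1})=4$, $\theta(P_{0,2})\leq 4$, and $P_{1,\bullet}$ chosen with $\theta(P_{1,1})$ large so that all hypotheses hold. The minimal choices are $A_0=1$, $B_0=\delta$ and $A_1=1$, $B_1=\delta^{10}$, giving $\theta(P'_{0,0})=1+5=6$ but $\theta(P'_{0,1})=10+4=14$ (no cancellation is possible since the two terms in each $P'_{0,j}$ have distinct degrees), so dominance in the row $i=0$ is destroyed and the induction hypothesis does not apply. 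The paper never needs dominance to be preserved: it performs the (unimodular) reduction to a diagonal system regardless, and uses the dominance hypothesis exactly once, in a degree argument at a maximal-degree index $m$, to rule out any nontrivial relation $\sum_j P_{i,j}Q_j=0$; this forces the diagonal entries of the reduced system to be nonzero, and surjectivity of nonzero elements of $R[\delta]$ then yields a solution. Your fallback suggestions (a different elimination order, induction on $\sum_i\theta(P_{i,i})$) are not developed and do not obviously repair either of these two points, so the argument as it stands is incomplete.
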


\begin{proof}
  We begin by transforming the system into an equivalent diagonal system with coefficients in $R[\delta]$.
  To do so, we give an algorithm:
  \begin{enumerate}
    \item
      If the system is empty or contains only zero coefficients, stop.
      Otherwise, we can assume $P_{0,0}$ is nonzero by swapping columns and/or rows.
    \item
      For each $i \neq 0$, compute the left Euclidean division using \Cref{lem:eucdiv}:
        \[ P_{i,0} = A_i P_{0,0} + R_{i,0}. \]
      Subtract from the $i$-th row the $0$-th row left-multiplied by $A_i$.
      Do similarly for columns, using right Euclidean divisions and right-multiplication instead.
      We obtain an equivalent system such that $\theta(P_{0,0})$ is stricly larger that the value of $\theta$ at any of the coefficients in the $0$-th row and in the $0$-th column.
    \item
      If there is a row other than the $0$-th whose left coefficient is nonzero, or if there is a column other than the $0$-th whose top coefficient is nonzero, swap the $0$-th row/column with that row/column and go back to step $2$.
      Each time we do so, the value $\theta(P_{0,0})$ decreases, which means the process ends.
      Once this is over, $P_{0,0}$ is the only nonzero coefficient on both the $0$-th row and the $0$-th column.
    \item
      Now, apply steps 1-4 to the subsystem with coefficients $P_{\geq 1, \geq 1}.$
  \end{enumerate}

  After the algorithm has ended, we have an equivalent diagonal system, of the form $\tilde x_i = \tilde P_i(\tilde b_i)$ with $\tilde P_i \in R[\delta]$.
  If this system has a solution, the original system has a solution too.
  Since nonzero endomorphisms in $R[\delta]$ are surjective, we only have to show that all the diagonal coefficients $\tilde P_i$ are nonzero.

  Assume $\forall i, \sum_{j=0}^r P_{i,j} Q_j = 0$ for some nonzero tuple $(Q_j)_{j=0,\ldots,r} \in (R[\delta])^{r+1}$.
  Choose an index $m \in \{0,\ldots,r\}$ such that $\theta(Q_m) \geq \theta(Q_j)$ for all $j$.
  We have:
    \[ \sum_{j \neq m} P_{m,j} Q_j = - P_{m,m} Q_m \]
  and therefore:
  \begin{align*}
    \theta(Q_m) & \leq \max_{j \neq m} (\theta(P_{m,j}) + \theta(Q_j)) - \theta(P_{m,m}) \\
    & < \theta(P_{m,m}) + \theta(Q_m) - \theta(P_{m,m}) = \theta(Q_m).
  \end{align*}

  This is a contradiction.
  Hence the vectors $(P_{i,j})_{i=0,\ldots,r}$ are $r$ independent vectors in $R[\delta]^r$.
  Since the elements $(0,\ldots,0,\tilde P_i,0,\ldots,0) \in R[\delta]^r$ are obtained from them via elementary operations, they are independent too, and thus $\tilde P_i \neq 0$.
\end{proof}

\paragraph{Proof of \Cref{crit-Rdelta-fad-comm}.}

\begin{proof}
  The necessary condition in \Cref{crit-Rdelta-fad-comm} follows from \Cref{necc-cond-Rdelta-fad}.
  Let $(R, \delta)$ be a differential field such that every endomorphism in $R[\delta]$ is surjective.
  Choose a nonzero element $x \in R[\delta]$ and write it as:
    \[ x = \sum_{i=0}^n x_i \delta^i \]
  with $x_n \neq 0$.
  To prove that $R[\delta]$ is weakly fadelian, we want to find $b,c \in R[\delta]$ such that:
    \[ 1 = bx + xc. \]
  i.e. coefficients $b_0, \ldots, b_{n-1}, c_0, \ldots, c_{n-1} \in R$ such that:
    \[ \left ( \sum_{i=0}^{n-1} b_i \delta^i \right ) \left ( \sum_{i=0}^n x_i \delta^i \right ) + \left ( \sum_{i=0}^n x_i \delta^i \right ) \left ( \sum_{i=0}^{n-1} c_i \delta^i \right ) = 1. \]

  Rewrite this as:
  \begin{align*}
    1 & = \sum_{i=0}^{n-1} \sum_{j=0}^n \left ( \sum_{k=0}^i b_i \binom{i}{k} x_j^{(k)} \delta^{i-k+j} \right ) + \left ( \sum_{k=0}^j x_j \binom{j}{k} c_i^{(k)} \delta^{j-k+i} \right ) \\
    & = \sum_{i=0}^{n-1} \sum_{j=0}^n \sum_{k=0}^n \left ( b_i \binom{i}{k} x_j^{(k)} + x_j \binom{j}{k} c_i^{(k)} \right ) \delta^{i+j-k}
  \end{align*}

  By \Cref{lem:eucdiv}, the coefficients in the decomposition of $1$ are unique.
  So, we get a system of equations: for each $d \in \{0, \ldots, 2n-1\}$ we must solve:
  \begin{equation}
    \label{eqn-coeffs-fad-Rdelta}
    \sum_{i=0}^{n-1} \sum_{j=0}^n b_i \binom{i}{i+j-d} x_j^{(i+j-d)} + x_j \binom{j}{i+j-d} c_i^{(i+j-d)} = \delta_{d,0}.
  \end{equation}

  For $d = 2n-1$ we get:
    \[ b_{n-1} x_n + x_n c_{n-1} = 0. \]
  Thus we can express $b_{n-1}$ as a function of $c_{n-1}$.
  Similarly, letting $d=2n-2$ lets one express $b_{n-2}$ as a function of (the derivatives of) $c_{n-2}$ and $c_{n-1}$, and so on until $d=n$.
  We get elements $A_{i,j} \in R[\delta]$ such that:
    \begin{equation}
      \label{eqn:bi-from-cj}
      b_i = \sum_{j=i}^{n-1} A_{i,j}(c_j)
    \end{equation}
  and such that $\theta(A_{i,j}) \leq n-i-1 \leq n-1$.
  Substitute $b_i$ for $\sum_j A_{i,j}(c_j)$ in \Cref{eqn-coeffs-fad-Rdelta} for $0 \leq d \leq n-1$ to obtain a system of $n$ equations of the form:
    \begin{equation}
      \label{eqn:system-fad}
      \sum_{i=0}^{n-1} P_{i,d}(c_i) = \delta_{d,0}.
    \end{equation}

  The expression of $b_i$ in \Cref{eqn:bi-from-cj} involves only derivatives of $c_i,c_{i+1},\ldots,c_{n-1}$ up to the $n-1$-th derivative.
  The only $n$-th derivatives that may appear in $P_{i,d}$ are in the terms:
    \[ x_j \binom{j}{i+j-d} c_i^{(i+j-d)}. \]
  If $d<i$, the binomial coefficient is zero.
  If $d>i$, the derivative is of order $i+j-d < n$.
  If $d=i$, the term obtained for $j=n$ is:
    \[ x_n c_d^{(n)} \]
  which effectively involves an $n$-th derivative with the nonzero coefficient $x_n$.
  This shows:
    \[ \theta(P_{d,d})=n \andd \theta(P_{k,d})\leq n-1 \text{ for } k \neq d. \]
  Hence the system given by \Cref{eqn:system-fad} is diagonally dominant and thus admits a solution by \Cref{lemma-domdiag-Rdelta}.
  This proves that $R[\delta]$ is weakly fadelian.
\end{proof}

\begin{remark}
  We have another sufficient condition that does not require $R$ to be commutative, but requires that all possible nonconstant polynomial differential equations have a solution in $R$, and not only linear ones.
  These are equations that may look something like:
    \[ a (X^{(19)})^2 b X' - c X' d X e X^{(3)} = 0. \]

  We do not include the proof here, since this has not yielded new examples.
\end{remark}

\subsection{A non-Noetherian fadelian ring}
\label{subsn:non-noeth-fad}

\begin{theorem}
  \label{ex-fad-non-noeth}
  There exists a non-Noetherian\footnote{By non-Noetherian, we systematically mean "neither left nor right Noetherian".} fadelian ring.
\end{theorem}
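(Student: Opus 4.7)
The fadelian example $k[\delta]$ from \Cref{diff-closed-field-Rdelta-fad} is always Noetherian, and in fact \emph{any} ring arising from the $R[\delta]$ construction is Noetherian: by \Cref{necc-cond-Rdelta-fad}, the base $R$ must be a division ring, whence \autocite[Theorem 2.6]{goodearl} forces $R[\delta]$ to be Noetherian. So we cannot simply exhibit a single differential operator ring; some more elaborate construction is needed. My proposal is to realize the target ring as a directed union of fadelian rings.

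The key observation is that fadelianity passes to directed unions: if $R = \bigcup_n R_n$ with each $R_n$ fadelian and $R_n \subseteq R_{n+1}$, then $R$ is fadelian, since any $x$ and nonzero $a$ in $R$ lie in some $R_n$, where witnesses $b,c$ with $x = ab + ca$ exist and remain valid in $R$. So the problem reduces to designing a chain of fadelian rings that does not stabilize in Noetherianity. A naive chain $k_n[\delta]$ over an increasing family of differentially closed fields collapses to $\bigl(\bigcup k_n\bigr)[\delta]$, which is still Euclidean and hence Noetherian by \Cref{lem:eucdiv}; the chain therefore has to genuinely enrich the non-commutative structure at each stage, not merely the scalars.

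The concrete attempt I would pursue is to fix a sufficiently rich commutative differential field $K$ admitting an infinite family $\delta_1, \delta_2, \ldots$ of pairwise commuting derivations, each meeting the surjectivity hypothesis of \Cref{crit-Rdelta-fad-comm}. The existence of such $K$ should follow from the model theory of partially differentially closed fields. Setting $R_n = K[\delta_1, \ldots, \delta_n]$ and $R = \bigcup_n R_n$, the ring $R$ is fadelian by locality, while the left (and right) ideals generated by $\{\delta_1, \ldots, \delta_n\}$ form a strictly ascending chain, so $R$ is neither left nor right Noetherian.

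The main obstacle is upgrading \Cref{crit-Rdelta-fad-comm} to the multi-derivation setting, so that each $R_n$ is indeed fadelian. The Euclidean valuation $\theta$ of \Cref{lem:eucdiv} no longer exists in a canonical way, and the diagonal dominance argument of \Cref{lemma-domdiag-Rdelta} has to be adapted---probably by induction on the number of derivations, using a lexicographic refinement of degree to carry out the key reduction step. Securing the existence of a base field $K$ with the required commuting, everywhere-surjective derivations is a secondary but non-trivial model-theoretic task.
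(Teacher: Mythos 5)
Your reduction step is fine: fadelianity is a local (first-order, existential) condition, so it passes to directed unions of fadelian subrings, exactly as in the spirit of \Cref{lemma-subring-fad-countable}. But the proposal then rests entirely on two claims that you leave unproven, and they are precisely where the mathematical content would lie. First, you need each $R_n = K[\delta_1,\ldots,\delta_n]$ to be fadelian, i.e.\ a multi-derivation analogue of \Cref{crit-Rdelta-fad-comm}. For $n \geq 2$ the one-variable machinery genuinely collapses: $K[\delta_1,\ldots,\delta_n]$ is not Euclidean, so \Cref{lem:eucdiv} and the Gaussian-elimination argument of \Cref{lemma-domdiag-Rdelta} have no direct counterpart, and it is not even clear that the statement is true --- you would at the very least need every nonzero element of $K[\delta_1,\ldots,\delta_n]$ (including operators such as $\delta_1-\delta_2$ or $\delta_1\delta_2$, which have no useful ``leading coefficient'' in a single variable) to act surjectively on $K$, a condition much stronger than in the single-derivation case. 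Second, the existence of a base field $K$ with infinitely many commuting derivations satisfying this surjectivity is asserted to ``follow from the model theory of partially differentially closed fields'' but is not established. As written, the argument is a research program, not a proof; even your non-Noetherianity claim (that the left ideals generated by $\delta_1,\ldots,\delta_n$ form a strictly increasing chain in the union) tacitly uses a uniqueness-of-representation statement in several $\delta$'s, i.e.\ a multivariate version of \Cref{lem:ass-polyn}, which would also need proof.

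The paper avoids all of this by staying with a single derivation and destroying Noetherianity by a logical, not differential-algebraic, device: take a differentially closed $(k,\delta)$, so that $k[\delta]$ is fadelian by \Cref{crit-Rdelta-fad-comm}, and pass to an ultrapower $k[\delta]^{\mathcal{U}}$ along a non-principal ultrafilter. Fadelianity is first-order, hence preserved by Łoś's theorem, while the cosets $e_n$ of the sequences $(1,\ldots,1,\delta,\delta^2,\ldots)$ generate an explicitly strictly increasing chain of left (and right) ideals, computed via the order function $\psi$. If you want to salvage your approach, you would have to prove the multi-derivation fadelianity criterion first --- an interesting question in its own right, but well beyond what your sketch provides.
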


\begin{proof}
  The idea is that fadelianity is a first order property, and thus is preserved by ultrapowers, whereas Noetherianity is not.

  Let $(k,\delta)$ be a differentially closed field as above, with $\delta \neq 0$.
  Then $k[\delta]$ is fadelian by \Cref{crit-Rdelta-fad-comm}.
  Let $\mathcal{U}$ be a non-principal ultrafilter on $\N$.
  The following ring is fadelian by Łoś's theorem (\autocite[Exercise 2.5.18]{modelth}) :
    \[ k[\delta]^{\mathcal{U}} = \left ( \prod_{n\in \N} k[\delta] \right )/\mathcal{U}. \]

  Let $e_n \in k[\delta]^{\mathcal{U}}$ be the coset of:
    \[ \hat e_n = (\underbrace{1_{k[\delta]}, 1_{k[\delta]}, \ldots, 1_{k[\delta]}}_{n}, \delta, \delta^2, \delta^3, \ldots) \in k[\delta]^\N. \]
  Let $I_n$ be the left ideal of $k[\delta]^{\mathcal{U}}$ generated by $e_n$.
  We have:
    \[ \hat e_{n-1} = (\underbrace{1_{k[\delta]}, 1_{k[\delta]}, \ldots, 1_{k[\delta]}}_{n-1}, \delta, \delta, \delta, \ldots) \hat e_n. \]

  So $e_{n+1} \in I_n$, and the sequence of left ideals $(I_n)$ is nondecreasing.
  We prove that it is strictly increasing by contradiction.
  Assume $e_n \in I_{n-1}$ for some $n\geq 0$.
  Then there exists $a \in k[\delta]^\N$ such that:
    \[ \hat e_n \sim a \hat e_{n-1} = \left ( a_1, \ldots, a_{n-1}, a_n \delta, a_{n+1} \delta^2, \ldots \right ). \]

  Consider the map $\psi : k[\delta] \rightarrow \Zz \cup \{+\infty\}$ defined in the following way: $\psi(0)=+\infty$, and otherwise write $x = P(\delta)$ as in \Cref{not:pdelta} and let $\psi(x) \eqdef \min \{ i \mid x_i \neq 0 \}$.
  The uniqueness part of \Cref{lem:ass-polyn} ensures that this is well-defined.
  For an element $\hat x \in k[\delta]^{\N}$, denote by $\psi(\hat x)$ the element of $(\Zz\cup\{+\infty\})^{\N}$ obtained by evaluating $\psi$ coordinatewise.
  Using $\geq$ to notate coordinatewise inequality, we have:
  \begin{align*}
    \psi(a \hat e_{n-1}) & = (\psi(a_0), \ldots, \psi(a_{n-1}), \psi(a_n) + 1, \psi(a_{n+1}) + 2, \ldots) \\
    & \geq (\underbrace{0,\ldots, 0}_{n-1}, 1, 2, 3, 4, \ldots).
  \end{align*}
  On the other hand:
    \[ \psi(\hat e_n) = (\underbrace{0,\ldots,0}_{n-1}, 0,1,2,3, \ldots). \]
  Therefore $\hat e_n$ and $a \hat e_{n-1}$ have finitely many common coefficients, which contradicts $\hat e_n \sim a \hat e_{n-1}$.
  So $(I_n)$ is a strictly increasing sequence of left ideals.
  This contradicts left Noetherianity.
  We prove similarly that $k[\delta]^{\mathcal{U}}$ is not right Noetherian.
\end{proof}

\subsection{A countable non-Noetherian fadelian ring}
\label{subsn:countable-non-noeth-fad}

\begin{lemma}
  \label{lemma-subring-fad-countable}
  Let $R$ be a fadelian ring and $S$ be a subset of $R$.
  Let $\kappa$ be the cardinal $\max(\aleph_0, |S|)$.
  There is a fadelian subring of $R$ of cardinality $\leq \kappa$ which contains $S$.
\end{lemma}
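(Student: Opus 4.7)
The plan is to perform a standard Löwenheim--Skolem style closure argument, iteratively enlarging $S$ both under the ring operations and under the choice of fadelian witnesses, while keeping the cardinality bound.

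First, I would let $T_0$ be the subring of $R$ generated by $S$; since $\kappa \geq \aleph_0$, we have $|T_0| \leq \kappa$. Then, assuming $T_n$ has been constructed with $|T_n| \leq \kappa$, I would use the axiom of choice to pick, for every pair $(x,a) \in T_n \times (T_n \setminus \{0\})$, witnesses $b_{x,a}, c_{x,a} \in R$ with $x = a b_{x,a} + c_{x,a} a$, which exist by the fadelianity of $R$. I would then define $T_{n+1}$ to be the subring of $R$ generated by $T_n$ together with the set
\[ W_n \eqdef \{ b_{x,a}, c_{x,a} \mid x \in T_n, a \in T_n \setminus \{0\} \}. \]
Since $|W_n| \leq 2|T_n|^2 \leq \kappa$, the resulting subring still has cardinality $\leq \kappa$.

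Finally I would set $T = \bigcup_{n \in \N} T_n$. This is a subring of $R$ as a nested union of subrings, contains $S$, and satisfies $|T| \leq \aleph_0 \cdot \kappa = \kappa$. To check fadelianity of $T$, take any $x \in T$ and nonzero $a \in T$: both lie in some common $T_n$, hence the chosen witnesses $b_{x,a}, c_{x,a}$ belong to $T_{n+1} \subseteq T$ and satisfy $x = a b_{x,a} + c_{x,a} a$ in $R$, hence in $T$.

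The only subtle point is ensuring that the witnesses introduced at stage $n$ themselves get witnesses of their own, which is precisely the role of the iteration and of taking the countable union at the end; there is no serious obstacle, the argument being essentially a transfinite-free application of the downward Löwenheim--Skolem theorem to the first-order language of rings augmented with Skolem functions for fadelianity.
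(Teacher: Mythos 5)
Your proposal is correct and follows essentially the same argument as the paper: generate $T_0=\gen{S}$, iteratively adjoin fadelian witnesses $b_{x,a},c_{x,a}$ for all pairs in $T_n$, and take the countable increasing union, with the same cardinality bookkeeping. No gap to report.
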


\begin{proof}
  We construct a sequence $R_i$ of subrings of $R$ of cardinality $\leq \kappa$ in the following way:
  \begin{itemize}
    \item
      $R_0$ is the subring of $R$ generated by $S$;
    \item
      Assume we have constructed $R_n$.
      For every couple $x,a \in R_n$ such that $a \neq 0$, choose elements $b_n(x,a)$ and $c_n(x,a)$ in $R$ such that:
        \[ x = ab_n(x,a) + c_n(x,a) a. \]

      We may do so since $R$ is fadelian.
      Let $R_{n+1}$ be the subring of $R$ generated by $R_n$ and the elements $b_n(x,a), c_n(x,a)$ for all pairs $x,a \in R_n$ with $a \neq 0$.
  \end{itemize}

  Finally, define:
    \[ R_{\infty} = \bigcup_{n \geq 0} R_n. \]

  Since $R_{\infty}$ is the increasing union of a countable family of rings of cardinality $\leq \kappa$ containing $S$, it is itself a ring of cardinality $\leq \kappa$ containing $S$.
  To prove that $R_{\infty}$ is fadelian, consider elements $x,a \in R_{\infty}$ with $a \neq 0$.
  There exists $n \in \N$ such that both $x$ and $a$ are in $R_n$.
  In $R_{n+1}$ and therefore in $R_{\infty}$, we have $x = ab_n(x,a) + c_n(x,a)a$.
  This proves that $R_{\infty}$ is fadelian.
\end{proof}

\begin{theorem}
  \label{countable-nonnoeth-fad}
  There exists a countable non-Noetherian fadelian ring.
\end{theorem}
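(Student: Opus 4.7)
The strategy is to combine the uncountable example $k[\delta]^{\mathcal{U}}$ from \Cref{ex-fad-non-noeth} with the downward Löwenheim-style construction of \Cref{lemma-subring-fad-countable}. The key idea is that non-Noetherianity, unlike cardinality, can be witnessed by only countably many elements of the ring, so we can carefully choose a countable seed set $S$ whose fadelian closure inherits the failure of Noetherianity.

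Concretely, let $R = k[\delta]^{\mathcal{U}}$ be the fadelian ring produced by \Cref{ex-fad-non-noeth}, and let $(e_n)_{n \in \N} \subseteq R$ be the sequence of elements used in that proof, so that the left ideals $I_n = R e_n$ form a strictly increasing chain. By definition of $I_n$, for each $n$ there exists an element $\lambda_n \in R$ such that $e_n = \lambda_n e_{n+1}$ (the proof of \Cref{ex-fad-non-noeth} exhibits such a $\lambda_n$ explicitly). Analogously, one obtains a sequence $(f_n)_{n \in \N}$ and multipliers $(\rho_n)_{n \in \N}$ witnessing the failure of right Noetherianity, with $f_n = f_{n+1} \rho_n$. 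I would then set
\[ S = \{ e_n, \lambda_n, f_n, \rho_n : n \in \N \}, \]
which is countable, and apply \Cref{lemma-subring-fad-countable} to obtain a countable fadelian subring $R' \subseteq R$ containing $S$.

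It remains to check that $R'$ is neither left nor right Noetherian. Consider the left ideals $J_n = R' e_n$ of $R'$. Since $\lambda_n \in R'$ and $e_n = \lambda_n e_{n+1}$, we have $e_n \in R' e_{n+1}$, so $J_n \subseteq J_{n+1}$. Conversely, if $e_{n+1} \in J_n$ held for some $n$, then a fortiori $e_{n+1} \in R e_n = I_n$, contradicting the strict increase of $(I_n)$ established in \Cref{ex-fad-non-noeth}. Hence $(J_n)$ is a strictly increasing chain of left ideals of $R'$, and $R'$ fails to be left Noetherian. The right Noetherian case is symmetric, using $f_n$ and $\rho_n$ in place of $e_n$ and $\lambda_n$.

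The main subtlety, rather than an obstacle, is precisely this point: transferring non-Noetherianity from $R$ to the subring $R'$ requires including in $S$ the specific multipliers that realize the ideal inclusions, otherwise the resulting chain in $R'$ need not even be nondecreasing. The strict increase, by contrast, is automatic because $R' \subseteq R$. Once this bookkeeping is handled, fadelianity of $R'$ is given for free by \Cref{lemma-subring-fad-countable}.
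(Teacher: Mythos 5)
Your proposal is correct and follows essentially the same route as the paper: seed the countable fadelian subring of $k[\delta]^{\mathcal{U}}$ (via \Cref{lemma-subring-fad-countable}) with the chain elements together with the multipliers witnessing the inclusions, so that the chain stays nondecreasing in the subring while strictness is inherited from the ambient ring. The only cosmetic difference is that the paper uses the single multiplier $u$, the coset of $(\delta,\delta,\delta,\ldots)$, which works for all $n$ (and for both the left and right chains, since your $\lambda_n$ all coincide with $u$ in the ultrapower), whereas you keep a countable family $\lambda_n,\rho_n$ — this changes nothing.
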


\begin{proof}
  Start with the non-Noetherian fadelian ring $R = k[\delta]^{\mathcal{U}}$ obtained in the proof of \Cref{ex-fad-non-noeth}.
  Let $S$ be the countable subset $\{ u, e_0, e_1, e_2, \ldots \}$ of $R$, where $u$ is the coset of $(\delta, \delta, \delta, \ldots) \in k[\delta]^{\N}$ and $e_n$ is the coset of:
    \[ \hat e_n = (\underbrace{1_{k[\delta]}, \ldots, 1_{k[\delta]}}_{n}, \delta, \delta^2, \delta^3, \ldots) \in k[\delta]^{\N}. \]
  By \Cref{lemma-subring-fad-countable}, there is a countable fadelian subring $R_{\infty}$ of $R$ containing $S$.
  Using elements of $S$, we replicate the proof of \Cref{ex-fad-non-noeth} in $R_{\infty}$: the sequence $(R_{\infty} e_n)_{n \geq 0}$ of left ideals of $R_{\infty}$ is strictly increasing, and similarly for the right ideals $(e_n R_{\infty})_{n \geq 0}$.
  We conclude that $R_{\infty}$ is a countable non-Noetherian fadelian ring.
\end{proof}

\section{Formal Laurent series on fadelian rings}
\label{sn:formal-laurent-series}

In this section, we study formal series over a domain $R$.
We define them in the following way:
\begin{definition}
  The ring $R[[X]]$ is the ring of formal series with coefficients in $R$ where the indeterminate $X$ commutes with elements of $R$, i.e. multiplication is given by:
    \[ \left ( \sum_{n \geq 0} a_n X^n \right ) \left ( \sum_{n \geq 0} b_n X^n \right ) = \sum_{n\geq 0} \left ( \sum_{i=0}^n a_i b_{n-i} \right ) X^n. \]
\end{definition}

If $P = \sum_{n\geq 0} a_nX^n$ is an element of $R[[X]]$, we denote by $P(0)$ the element $a_0 \in R$.
We also define Laurent series over $R$:
\begin{definition}
  The ring $R((X))$ consists of elements which are either $0$ or of the form $X^j P$, where $j \in \Z$, $P \in R[[X]]$ and $P(0) \neq 0$, equipped with the product $(X^j P)(X^k Q) = X^{j+k} (PQ)$.
\end{definition}

This construction is related to fadelianity because it preserves it:
\begin{theorem}
  \label{laurent-series-fad}
  The domain $R$ is fadelian if and only if $R((X))$ is fadelian.
\end{theorem}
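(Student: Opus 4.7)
The two implications have quite different flavors; the backward direction is immediate while the forward direction contains all the content. For the easy direction, assuming $R((X))$ is fadelian, I would take $x, a \in R$ with $a \neq 0$, view them as constant Laurent series, and invoke fadelianity of $R((X))$ to obtain $x = aB + Ca$ for some $B, C \in R((X))$. Since $X$ is central and both $x$ and $a$ are concentrated in degree zero, extracting the coefficient of $X^0$ yields $x = a B_0 + C_0 a$ with $B_0, C_0 \in R$, proving fadelianity of $R$.

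For the forward direction, assume $R$ is fadelian and let $x \in R((X))$, $a \in R((X)) \setminus \{0\}$. I would first perform a reduction to simplify the shape of $a$. Writing $a = X^j P$ with $P \in R[[X]]$ and $P_0 := P(0) \neq 0$, and using that $X$ is central in $R((X))$, the equation $x = ab + ca$ becomes equivalent to $X^{-j} x = Pb + cP$. So I may assume outright that $a \in R[[X]]$ with $a_0 := a(0) \neq 0$ in $R$. This is the crucial setup: we now have a single fixed nonzero element $a_0 \in R$ to which fadelianity of $R$ can be applied repeatedly.

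Assume $x \neq 0$ (the case $x = 0$ being trivial) and write $x = \sum_{n \geq N} x_n X^n$ with $x_N \neq 0$. The plan is to construct $b = \sum_{n \geq N} b_n X^n$ and $c = \sum_{n \geq N} c_n X^n$ coefficient by coefficient. Expanding $x = ab + ca$ in degree $n \geq N$ gives, after isolating the terms involving the unknown coefficients of that degree, an equation of the form $a_0 b_n + c_n a_0 = x_n - S_n$, where $S_n \in R$ depends only on the previously constructed coefficients $b_m, c_m$ for $N \leq m < n$. Fadelianity of $R$, applied to $x_n - S_n$ and the nonzero element $a_0$, produces the required $b_n, c_n \in R$, and the induction proceeds indefinitely.

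There is no substantial obstacle once the reduction to $a \in R[[X]]$ with $a_0 \neq 0$ has been made; the only point deserving any care is ensuring that the constructed $b, c$ are honest elements of $R((X))$. Starting the induction at the minimum degree $N$ of $x$ guarantees this, since both series are then supported in degrees $\geq N$. Trying instead to solve coefficient equations at every integer degree would in general produce formal series with no lower bound, falling outside $R((X))$, so this bookkeeping is the one delicate point in an otherwise routine argument.
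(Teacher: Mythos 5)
Your proof is correct. The forward direction is essentially the paper's argument: reduce to the case where $a$ is a power series with nonzero constant term (you do this upfront via the substitution $X^{-j}x$, the paper instead keeps $a = X^kQ$ and shifts exponents at the very end), and then solve the coefficient equations inductively, each step being an application of fadelianity of $R$ to the fixed nonzero element $a_0 = Q(0)$; your remark about anchoring the induction at the minimal degree $N$ of $x$ so that $b,c$ genuinely lie in $R((X))$ is exactly the right bookkeeping. The backward direction, however, is where you genuinely diverge, and your route is simpler: you just read off the coefficient of $X^0$ in $x = aB + Ca$, using that multiplication by the degree-zero element $a$ acts coefficientwise, so $x = aB_0 + C_0a$ with $B_0, C_0 \in R$. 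The paper instead multiplies by a power $X^r$ to land in $R[[X]]$, uses integrality of $R$ to argue that the exponents $j,k$ must vanish in the relevant case, and then iteratively strips factors of $X$ before evaluating at $0$; this is a consequence of only having "evaluation at $0$" defined on $R[[X]]$ rather than a coefficient-extraction map on all of $R((X))$. Your version avoids both the iteration and the appeal to integrality, at the small cost of having to note (one line) that the $X^0$-coefficient of $aB$ is $a$ times the $X^0$-coefficient of $B$, which is immediate from the definition of the product. Both arguments are sound; yours is the more economical.
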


\begin{proof}
~
\begin{itemize}
  \item
    First assume that $R((X))$ is fadelian.
    Let $x, a \in R\setminus\{0\}$.
    There are $X^j P, X^k Q \in R((X))$, with $P(0), Q(0) \neq 0$, such that:
      \[ x = X^j P a + a X^k Q. \]
    By multiplying by some $X^r$, one may assume that $r,j,k$ are three nonnegative integers, one of them zero, such that:
      \[ x X^r = X^j P a + a X^k Q . \]

    If $r=0$, then $x = X^j P a + a X^k Q$ in $R[[X]]$ and by evaluating at $0$, we get $x \in Ra+aR$.

    Otherwise, we have $r \geq 1$ and either $j$ or $k$ is zero.
    We assume for example that $j=0$.
    We have:
      \[ x X^r = P a + a X^k Q. \]

    Since $r \geq 1$, we know that $(x X^r)(0)=0$.
    Moreover $(Pa)(0) = P(0)a$ is nonzero because $R$ is integral.
    Hence $(a X^k Q)(0)$ is also nonzero.
    This means that necessarily $k=0$.
    We have the equality:
      \[ x X^r = Pa + aQ. \]
    By evaluating this equality at $0$, we get: $0 = P(0)a + aQ(0)$.
    Hence:
      \[ x X^r = (P-P(0))a + a(Q-Q(0)). \]
    Both $P-P(0)$ and $Q-Q(0)$ cancel at $0$. We can factor $X$ from the equality:
      \[ x X^{r-1} = P_1 a + a Q_1. \]
    Iterate the process to reach:
      \[ x = P_r a + a Q_r. \]
    Finally, evaluate at zero to obtain the desired equality in $R$:
      \[ x = P_r(0) a + a Q_r(0). \]
    This proves that $R$ is fadelian.

  \item
    Now assume that $R$ is fadelian.
    Consider two nonzero elements of $R((X))$ written as $X^j P, X^k Q$ with $j,k \in \Z$, $P, Q \in R[[X]]$ and $Q(0) \neq 0$.

    Write $P = \sum_{n \geq 0} p_n X^n$ and $Q = \sum_{n \geq 0} q_n X^n$.
    Then $q_0$ is a nonzero element of $R$.

    We are searching for sequences $(b_0, b_1, b_2, \ldots)$ and $(c_0, c_1, c_2, \ldots)$ of elements of $R$ such that:
    \begin{equation}
      \label{eqn-fad-laurent}
      \sum_{n \geq 0} p_n X^n
      = \left ( \sum_{n \geq 0} q_n X^n \right ) \left ( \sum_{n \geq 0} b_n X^n \right ) + \left ( \sum_{n \geq 0} c_n X^n \right ) \left ( \sum_{n \geq 0} q_n X^n \right ).
    \end{equation}
    If we find such sequences, then we have the equality in $R((X))$:
      \[ X^j P = (X^k Q) \left ( \sum_{n \geq 0} b_n X^{n+j-k} \right ) + \left ( \sum_{n \geq 0} c_n X^{n+j-k} \right ) (X^k Q) \]
    which shows that $R((X))$ is fadelian.
    
    We prove the existence of $(b_n)$ and $(c_n)$ by induction:
    \begin{itemize}
      \item
        Looking at the constant coefficient in \Cref{eqn-fad-laurent}, we get the equality:
          \[ p_0 = q_0 b_0 + c_0 q_0. \]
        We can fix $b_0, c_0 \in R$ satisfying this equality, because $R$ is fadelian and $q_0 \neq 0$.

      \item
        Assume we have defined $b_0,\ldots,b_{n-1},c_0,\ldots,c_{n-1}$ such that the coefficients in front of $X^i$ are equal in both sides of \Cref{eqn-fad-laurent}, for $i = 0, \ldots, n-1$.
        Now consider the coefficient in front of $X^n$.
        We are trying to solve the equation:
          \[ p_n = \sum_{i=0}^n q_i b_{n-i} + c_{n-i} q_i \]
        which can be rewritten as:
          \[ p_n - \sum_{i=1}^n (q_i b_{n-i} + c_{n-i} q_i) = q_0 b_n + c_n q_0. \]
        We can fix $b_n, c_n \in R$ satisfying this equality, because $R$ is fadelian and $q_0 \neq 0$.
    \end{itemize}
\end{itemize}
\end{proof}

To construct a non-Ore fadelian ring (\Cref{cor:non-ore-fad}), the main ingredient is the following theorem:
\begin{theorem}
  \label{laurent-nonnoeth-countable-fadring-not-ore}
  Assume $R$ is a countable fadelian ring which is not right (resp. left) Noetherian.
  Then $R((X))$ is a fadelian ring which is not right (resp. left) Ore.
\end{theorem}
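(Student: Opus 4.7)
The plan is to invoke \Cref{laurent-series-fad} immediately for the fadelianity of $R((X))$; the substantive work is to disprove the right Ore condition. To this end, I would exhibit nonzero $f, c_0 \in R((X))$ with $fR((X)) \cap c_0 R((X)) = 0$, defining $f := \sum_{n \geq 0} c_n X^n$ for a carefully chosen sequence $(c_n)_{n \geq 0}$. The construction uses all three hypotheses: since $R$ is non-right-Noetherian, fix a non-finitely-generated right ideal $K \subsetneq R$; since $R$ is countable and infinite (finite rings being Noetherian), enumerate $R \setminus \{0\}$ as $r_1, r_2, \ldots$; then select $c_n \in K$ inductively, starting with any nonzero $c_0$ and, for each $n \geq 1$, a $c_n$ satisfying $c_n r_n \notin I_n := \sum_{i<n} c_i R$.

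The main obstacle is verifying that this inductive choice is always feasible, and the key ingredient is the identity $K r_n R = K$ for every nonzero $r_n \in R$. By simplicity of $R$ (\Cref{prop:wfad-is-simple}), one has $R r_n R = R$, so writing $1 = \sum_i \alpha_i r_n \beta_i$ and using $K R = K$, one computes $k = \sum_i (k \alpha_i) r_n \beta_i \in K r_n R$ for every $k \in K$, giving $K \subseteq K r_n R$; the reverse inclusion $K r_n R \subseteq KR = K$ is immediate. Thus, were $K r_n$ contained in $I_n$, one would deduce $K = K r_n R \subseteq I_n R = I_n$, which is absurd since $I_n$ is finitely generated whereas $K$ is not. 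Hence some $c \in K$ satisfies $c r_n \notin I_n$, validating the induction.

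For the non-Ore verification, I would suppose $fp = c_0 q$ with $p, q \in R((X))$ nonzero. Since $R((X))$ is an integral domain (as $R$ is, by \Cref{weakly-fad-is-integral}) and $X$ is central and regular, one reduces to the case $p, q \in R[[X]]$ with $p(0), q(0) \neq 0$. Matching coefficients of $X^n$ yields $\sum_{i=0}^n c_i p_{n-i} = c_0 q_n$; isolating the $c_n p(0)$ term then shows $c_n p(0) \in I_n$ for every $n \geq 1$. Writing the nonzero element $p(0) \in R$ as $p(0) = r_k$ gives $c_k r_k \in I_k$ when $n = k$, contradicting the construction. The case where $R$ is not left Noetherian is handled by an entirely symmetric argument: fix a non-finitely-generated left ideal $K^L$, use the left-sided identity $R r_n K^L = K^L$ to build $c_n \in K^L$ with $r_n c_n \notin \sum_{i<n} R c_i$, and apply the analogous coefficient-matching argument to the equation $pf = q c_0$ to derive the failure of the left Ore condition for $R((X))$.
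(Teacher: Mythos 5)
Your proposal is correct, and it proves the theorem by a genuinely different route in the key step. The paper starts from a strictly increasing chain of right ideals $\tilde I_0 \subsetneq \tilde I_1 \subsetneq \cdots$, picks $b_n \in \tilde I_{n+1}\setminus \tilde I_n$, and makes the inductive choice of $c_n$ (with $b_n c_n a_n \notin I_{n-1}$) feasible by invoking weak fadelianity directly, writing $1 = \gamma_n a_n + a_n \delta_n$ and deducing $b_n \in I_{n-1}$ if the choice were impossible; its witnesses against the Ore condition are the two series $A=\sum_n b_nc_nX^n$ and $B=A+b_0$. You instead take all coefficients inside a single non-finitely-generated right ideal $K$, and make the inductive choice feasible via simplicity (\Cref{prop:wfad-is-simple}): from $R r_n R = R$ you get $K r_n R = K$, so $K r_n \subseteq I_n$ would give $K \subseteq I_n$. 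One small point to make explicit here: containment of a non-finitely-generated ideal in a finitely generated one is not by itself absurd; the contradiction comes from the fact that $I_n = \sum_{i<n} c_i R \subseteq K$ (each $c_i$ lies in $K$), so $K \subseteq I_n$ forces $K = I_n$, which is finitely generated — a one-line addition. Your witnesses, $f = \sum_n c_n X^n$ and the constant $c_0$, are cleaner than the paper's pair $A, B=A+b_0$ and avoid the $b_0k_0 = 2b_0l_0$ bookkeeping; your normalization to $p(0),q(0)\neq 0$ via order comparison (using that $R((X))$ is a domain, by \Cref{weakly-fad-is-integral}) and the coefficient isolation $c_np(0)\in I_n$ are sound, as is the symmetric left-sided argument. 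A further benefit of your approach: the non-Ore part only uses that $R$ is a countable simple domain which is not right (resp.\ left) Noetherian — fadelianity enters only through \Cref{laurent-series-fad} and through implying simplicity and integrality — so you have in effect proved the stronger statement mentioned in the remark following the theorem, whereas the paper deliberately proves only the weaker version.
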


A more general version holds (if $R$ is a non-Noetherian countable simple domain, then $R((X))$ is not Ore) but we prove the weaker version.
\Cref{laurent-nonnoeth-countable-fadring-not-ore} and \Cref{countable-nonnoeth-fad} directly imply:
\begin{corollary}
  \label{cor:non-ore-fad}
  There exists a fadelian ring which is neither right nor left Ore.
\end{corollary}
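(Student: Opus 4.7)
The plan is essentially to assemble the two ingredients already provided by \Cref{countable-nonnoeth-fad} and \Cref{laurent-nonnoeth-countable-fadring-not-ore}, keeping careful track of the sidedness conventions.

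First, I would invoke \Cref{countable-nonnoeth-fad} to obtain a countable fadelian ring $R$ which, according to the footnote to \Cref{ex-fad-non-noeth}, is neither left nor right Noetherian. This is the crucial input: the ring $R$ simultaneously fails both chain conditions, which is exactly what the proof of \Cref{ex-fad-non-noeth} established by exhibiting strictly increasing sequences of both left and right ideals in $k[\delta]^{\mathcal{U}}$, a property that transfers to the countable subring constructed in \Cref{countable-nonnoeth-fad}.

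Next, I would apply \Cref{laurent-nonnoeth-countable-fadring-not-ore} twice. Since $R$ is a countable fadelian ring which is not right Noetherian, the theorem tells us that $R((X))$ is a fadelian ring which is not right Ore. Since $R$ is also not left Noetherian, the same theorem yields that $R((X))$ is not left Ore. Combining both conclusions, $R((X))$ is a fadelian ring which is neither right nor left Ore, as required.

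There is no real obstacle here: the corollary is a direct consequence of the two previously proved theorems, and the only thing to verify is that the single ring $R$ furnished by \Cref{countable-nonnoeth-fad} fulfills the hypothesis of \Cref{laurent-nonnoeth-countable-fadring-not-ore} on both sides simultaneously, which is guaranteed by the paper's convention that "non-Noetherian" means neither left nor right Noetherian.
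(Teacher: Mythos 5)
Your proposal is correct and is exactly the paper's own argument: the corollary is stated as a direct consequence of \Cref{countable-nonnoeth-fad} and \Cref{laurent-nonnoeth-countable-fadring-not-ore}, applied to the same countable fadelian ring that fails Noetherianity on both sides. Your added care about the sidedness conventions (using the theorem once for the right and once for the left case) is precisely what makes the one-line deduction legitimate.
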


\begin{proof}[Proof of \Cref{laurent-nonnoeth-countable-fadring-not-ore}]
  We focus on right ideals, as the other case is dual.
  The ring $R((X))$ is fadelian by \Cref{laurent-series-fad}.
  Since $R$ is countable and not right Noetherian, there are:
  \begin{itemize}
    \item
      a bijective enumeration $(a_0, a_1, a_2, \ldots)$ of all elements of $R$, with $a_0 = 0$;
    \item
      a strictly increasing sequence of right ideals of $R$:
        \[ 0 = \tilde I_0 \subsetneq \tilde I_1 \subsetneq \tilde I_2 \subsetneq \ldots. \]
  \end{itemize}

  Choose for every $n \geq 0$ an element $b_n \in \tilde I_{n+1} \setminus \tilde I_n$ and define the right ideal:
    \[ I_n \eqdef b_0 R + \ldots + b_n R (\subseteq \tilde I_{n+1}). \]

  Let $n\geq 1$.
  We shall prove that there is a $c_n \in R$ such that $b_n c_n a_n \not\in I_{n-1}$.
  Assume by contradiction that there is no such $c_n$.
  Then $b_n R a_n \subseteq I_{n-1}$.
  Write:
    \[ 1 = \gamma_n a_n + a_n \delta_n. \]
  for some $\gamma_n, \delta_n \in R$.
  Then:
    \[ b_n = b_n \cdot 1 = b_n ( \gamma_n a_n + a_n \delta_n ) = \underbrace{b_n \gamma_n a_n}_{\in b_n R a_n} + ( \underbrace{b_n \cdot 1 \cdot a_n}_{\in b_n R a_n} ) \delta_n \in I_{n-1} \subseteq \tilde I_n. \]


  This contradicts the choice of $b_n$ as an element of $\tilde I_{n+1} \setminus \tilde I_n$.
  So we may choose a sequence $c_1, c_2, \ldots$ of elements of $R$ such that $b_n c_n a_n \not\in I_{n-1}$.
  We also define $c_0 = 1$.

  Define the set $J_n \eqdef \left \{ x \in R \,\middle\vert \, b_n c_n x \in I_{n-1} \right \}$.
  For $n \geq 1$, the set $J_n$ is a right ideal of $R$ which does not contain $a_n$.
  Since the elements $(a_n)_{n \geq 1}$ form an exhaustive enumeration of $R \setminus \{0\}$, we have:
    \[ \bigcap_{n \geq 1} J_n = 0. \]

  Now consider the two following nonzero elements of $R((X))$:
    \[ A = \sum_{n \geq 0} b_n c_n X^n \andd B = A + b_0. \]

  To prove that $R((X))$ is not right Ore, it suffices to prove that $AT=BS$ implies $T=S=0$.
  Assume by contradiction that $AT=BS$, where $T$ and $S$ are nonzero elements which we write as:
    \[ T = \sum k_n X^n \andd S= \sum l_n X^n. \]

  By multiplying by some power of $X$, we may assume that $k_n = l_n = 0$ for negative $n$ and that either $k_0 \neq 0$ or $l_0 \neq 0$.
  Look at the constant coefficient in the equality $AT = BS$ to obtain:
    \[ b_0 k_0 = 2 b_0 l_0. \]

  Since $R$ is integral and $b_0 \neq 0$, we have $k_0 = 2 l_0$.
  Now look at the coefficient in front of $X^i$ in the equality $AT = BS$:
    \[ b_i c_i k_0 + \sum_{j=0}^{i-1} b_j c_j k_{i-j} = b_i c_i l_0 + b_0 l_i + \sum_{j=0}^{i-1} b_j c_j l_{i-j}. \]

  Substitute $k_0$ by $2l_0$ in this equality and isolate the term $b_i c_i l_0$ to obtain:
    \[ b_i c_i l_0 = \sum_{j=0}^{i-1} b_j c_j (l_{i-j} - k_{i-j}) + b_0 l_i. \]

  Hence, $b_i c_i l_0$ belongs to the ideal $I_{i-1}$.
  This means that $l_0$ belongs to the ideal $J_i$ for all $i \geq 1$.
  As we have shown, $\bigcap_{i \geq 1} J_i = 0$.
  This implies $l_0=k_0=0$, which is a contradiction.
\end{proof}

\begin{remark}
  In this article, we have used the axiom of choice crucially multiple times.
  We have used it to obtain a differentially closed field to apply \Cref{diff-closed-field-Rdelta-fad} to, and to obtain a non-principal ultrafilter in the proof of \Cref{ex-fad-non-noeth}.

  Nevertheless, Maxime Ramzi has observed that \Cref{cor:non-ore-fad} holds in ZF by the following argument: if $V$ is a model of ZF and $L$ is its constructible universe, then $L$ is a model of ZFC and thus it proves that the first-order theory of non-Ore fadelian rings has a model.
  This model is also a model in $V$ (we omit the verifications).
  We conclude by completeness that a choice-free proof of \Cref{cor:non-ore-fad} exists.

  Funnily, this argument does not work for \Cref{ex-fad-non-noeth}, because the theory of non-Noetherian rings is not first-order.
  But since \Cref{cor:non-ore-fad} implies \Cref{ex-fad-non-noeth}, it is still true that \Cref{ex-fad-non-noeth} holds in ZF.
  
  This also means that the answer to \Cref{conj:wk-fad-not-fad} can not require the axiom of choice.
\end{remark}

\printbibliography

\newpage
\appendix
\section{Formalization of results from \Cref{sn:general} in Lean 3.48}
\label{app:formalization}

\begin{lstlisting}[escapeinside={(@}{@)}]
import algebra.ring
import tactic.nth_rewrite
import tactic.noncomm_ring

-- Thanks to the Lean Zulip for their help, especially to the following people: Riccardo Brasca, Eric Wieser, Ruben Van de Velde, Patrick Massot

-- Fadelian, weakly fadelian rings (@[\Cref{defn:fad-rings}]@)
class fadelian (R : Type*) [ring R] : Prop :=
  (prop : ∀(x:R), ∀(a:R), (a ≠ 0) → (∃(b:R), ∃(c:R), x=a*b+c*a))
class weak_fadelian (R : Type*) [ring R] : Prop :=
  (prop : ∀(a:R), (a ≠ 0) → (∃(b:R), ∃(c:R), 1=a*b+c*a))

-- Fadelian rings are weakly fadelian
instance fadelian.to_weak_fadelian {R : Type*} [ring R] [fadelian R] :
  weak_fadelian R := begin
    apply weak_fadelian.mk,
    exact fadelian.prop 1,
  end

-- Integral rings
class integral (R : Type*) [ring R] : Prop :=
  (prop : ∀(x:R), ∀(y:R), (x*y=0) → (x=0) ∨ (y=0))

-- Left Ore rings (@[\Cref{defn:ore}]@)
class left_ore (R : Type*) [ring R] : Prop :=
  (prop : ∀(x:R), ∀(y:R), (x≠0) → (y≠0) → ∃(a:R),∃(b:R),(a≠0) ∧ (b≠0) ∧ (a*x=b*y))

-- In a weakly fadelian ring, (@$xy=yx=0 \Rightarrow x^2=0 \text{ or } y^2=0$@) (@[\Cref{lemma-first-integral}]@)
lemma lem_integral_1 {R :Type*} [ring R] [weak_fadelian R]
  (x:R) (y:R) (xy_zero : x*y=0) (yx_zero : y*x=0) :
  (x*x=0) ∨ (y*y=0) :=
  begin
    cases (em (x=0)) with x_zero x_nonzero,
    
    have xx_zero : x*x=0 := by rw [x_zero, mul_zero],
    left, exact xx_zero,

    obtain ⟨b,c,d⟩ := weak_fadelian.prop x x_nonzero,
    have yy_zero : (y*y = 0) := begin
      nth_rewrite 0 ← mul_one y,
      rw [d, mul_add, add_mul],
      assoc_rewrite yx_zero,
      assoc_rewrite xy_zero,
      noncomm_ring,
    end,
    right, exact yy_zero,
end

-- In a weakly fadelian ring, (@$x^2=0 \Rightarrow x=0$@) (@[\Cref{lemma-second-integral}]@)
lemma lem_integral_2 {R :Type*} [ring R] [weak_fadelian R]
  (x : R) (xx_zero : x*x=0) :
  x=0 :=
  begin
    classical,
    by_contradiction x_nonzero,
    obtain ⟨b, c, d⟩ := weak_fadelian.prop x x_nonzero,

    have cx_eq_cxcx : c*x=c*(x*c)*x := begin
      rw [← mul_one (c*x), d, mul_add],
      assoc_rewrite xx_zero,
      noncomm_ring,
    end,
    have xb_eq_xbxb : x*b=x*(b*x)*b := begin
      rw [← one_mul (x*b), d, add_mul],
      assoc_rewrite xx_zero,
      noncomm_ring,
    end,

    have cxxb_zero : (c*x)*(x*b) = 0 :=
      begin assoc_rewrite xx_zero, noncomm_ring end,

    have xb_from_cx : x*b = 1 - c * x
      := eq_sub_of_add_eq (eq.symm d),
    have xb_cx_commute : (x*b)*(c*x) = (c*x)*(x*b)
      := by rw [xb_from_cx, sub_mul, one_mul, mul_sub, mul_one],
    have xbcx_zero : (x*b)*(c*x) = 0
      := (eq.congr rfl cxxb_zero).mp xb_cx_commute,

    have xbxb_or_cxcx_zero : ((x*b)*(x*b)=0) ∨ ((c*x)*(c*x))=0
      := lem_integral_1 (x*b) (c*x) xbcx_zero cxxb_zero,

    have one_eq_zero : ((0:R)=(1:R)) := begin
      cases xbxb_or_cxcx_zero with xbxb_zero cxcx_zero,

      assoc_rewrite (eq.symm xb_eq_xbxb) at xbxb_zero,
      rw [xbxb_zero, zero_add] at d,
      have ccxx_one : (c*c)*(x*x) = 1
        := by rw [← mul_assoc, mul_assoc c c x, ← d, mul_one, ← d],
      rw [xx_zero, mul_zero] at ccxx_one,
      apply ccxx_one,

      assoc_rewrite (eq.symm cx_eq_cxcx) at cxcx_zero,
      rw [cxcx_zero, add_zero] at d,
      have xxbb_one : (x*x)*(b*b) = 1
        := by rw [← mul_assoc, mul_assoc x x b, ← d, mul_one, ← d],
      rw [xx_zero, zero_mul] at xxbb_one,
      apply xxbb_one,
    end,

    have x_one : (x=x*1) := by rw mul_one,
    rw [← one_eq_zero, mul_zero] at x_one,
    exact x_nonzero x_one,
  end

-- Weakly fadelian rings are integral (@[\Cref{weakly-fad-is-integral}]@)
instance weak_fadelian.to_integral {R :Type*} [ring R] [weak_fadelian R] :
  integral R :=
begin
  apply integral.mk, intro x, intro y, intro xy_zero,
  have yx_zero : y*x=0 := begin
    apply lem_integral_2 (y*x),
    assoc_rewrite xy_zero,
    noncomm_ring,
  end,
  cases (lem_integral_1 x y xy_zero yx_zero)
    with xx_zero yy_zero,
  left, exact lem_integral_2 x xx_zero,
  right, exact lem_integral_2 y yy_zero,
end

-- Weakly fadelian left Ore rings are fadelian (@[\Cref{wf-ore-is-fad}]@)
theorem left_ore_weak_fadelian_is_fadelian {R :Type*} [ring R] [left_ore R] [weak_fadelian R] :
  fadelian R :=
begin
  have H : integral R := by apply weak_fadelian.to_integral,

  apply fadelian.mk, intro x, intro a, intro a_nonzero,
  cases (em (x=0)) with x_zero x_nonzero,

  existsi (0:R), existsi (0:R), rw x_zero, noncomm_ring,

  obtain ⟨b, c, b_nonzero, c_nonzero, bx_eq_ca⟩
    := left_ore.prop x a x_nonzero a_nonzero,
  have ab_nonzero : (a*b ≠ 0) := begin
    intro ab_zero,
    cases (integral.prop a b ab_zero) with a_zero b_zero,
    exact a_nonzero a_zero, exact b_nonzero b_zero,
  end,
  obtain ⟨k, l, abk_p_lab_eq_one⟩ := weak_fadelian.prop (a*b) ab_nonzero,
  existsi (b*k*x), existsi (l*a*c),
  assoc_rewrite (eq.symm bx_eq_ca),
  rw [← mul_assoc, ← mul_assoc, ← add_mul],
  rw [mul_assoc l a, ← abk_p_lab_eq_one, one_mul],
end
\end{lstlisting}

\end{document}